\newtheorem{theorem}{Theorem}
\newtheorem{lemma}[theorem]{Lemma}
\newtheorem{corollary}[theorem]{Corollary}
\newtheorem{proposition}[theorem]{Proposition}
\theoremstyle{definition}
\newtheorem{example}[theorem]{Example}
\newtheorem{remark}[theorem]{Remark}
\begin{document}

\title[Interpolation of strictly singular operators]{Interpolation and extrapolation of strictly singular operators between $L_p$ spaces}

\author[F. L. Hern\'{a}ndez]{Francisco L. Hern\'{a}ndez}
\address{F. L. Hern\'{a}ndez\\ Departamento de An\'{a}lisis Matem\'{a}tico, Universidad Complutense de Madrid, 28040, Madrid, Spain.}
\email{pacoh@mat.ucm.es}

\author[E. M. Semenov]{Evgeny M. Semenov}
\address{E. M. Semenov\\ Department of Mathematics, Voronezh State University, Voronezh 394006 (Russia).}
\email{nadezhka\_ssm@geophys.vsu.ru}

\author[P. Tradacete]{Pedro Tradacete}
\address{P. Tradacete\\ Departamento de Matem\'aticas\\ Universidad Carlos III de Madrid\\ 28911, Legan\'es, Madrid, Spain.}
\email{ptradace@math.uc3m.es}

\thanks{Research partially supported by Spanish Government Grant MTM2016-76808-P and Grupo UCM 910346. Second author was supported by Russian grant RFBR 17-01-00138. Third author also partially supported by MTM2016-75196-P}

\subjclass[2010]{46B70, 47B07, 47B38}

\keywords{Interpolation; Strictly singular operator; $L_p$ spaces; $L$-characteristic set}

\maketitle

\begin{abstract}
We study the interpolation and extrapolation properties of strictly singular operators between different $L_p$ spaces. To this end, the structure of strictly singular non-compact operators between \,$L_p-L_q$ spaces is analyzed. Among other things, we clarify the relation between strict singularity and the $L$-characteristic set of an operator. In particular, Krasnoselskii's interpolation theorem for compact operators is extended to the class of strictly singular operators.
\end{abstract}

\tableofcontents

\section{Introduction}

The classical Riesz-Thorin interpolation theorem for operators between $L_p$ spaces has a well-known counterpart, due to M. A. Krasnoselskii \cite{Krasnoselskii}, concerning the compactness properties of the operators involved (see \cite[Theorem IV.2.9]{BS}, also \cite{KZPS}). Given the range of applicability of Krasnoselskii's result, an analogue for strictly singular operators would be desirable. The  aim of the present paper is to provide the version of this principle for strictly singular operators, and to extend several results in this line that have been given for endomorphisms on $L_p$ spaces in \cite{HST}.

Recall that an operator between Banach spaces is strictly singular if it is never invertible on a (closed) infinite dimensional subspace. This class of operators forms a closed  two-sided operator ideal, containing that of compact operators, and was introduced by T. Kato \cite{Kato} in connection with the perturbation theory of Fredholm operators. Certain properties of compact operators, such as the spectral theory, can be extended to the class of strictly singular operators (see for instance \cite{G}), whereas many other facts are not in general true for this larger class; these include in particular the lack of non-trivial invariant subspaces \cite{Read} and interpolation properties \cite{Heinrich, Beucher}.

Despite the lack of interpolation properties in the general case, we will show that strict singularity can be interpolated for operators between different $L_p$ spaces extending Krasnoselskii's interpolation result. It should be noted that the classical proof of Krasnoselskii's theorem is based on the approximation of a compact operator by finite-rank ones, while the infinite dimensional nature of strictly singular operators requires a  different approach. In   Theorem \ref{t:interpol} we show that if  an operator $T:L_{p_0}\rightarrow L_{q_0}$ is strictly singular and $T:L_{p_1}\rightarrow L_{q_1}$ is bounded, then $T:L_{p_\theta}\rightarrow L_{q_\theta}$ is strictly singular for each $(p_\theta,q_\theta)$ in the interior of the interpolation segment joining $(p_0,q_0)$ with $(p_1,q_1)$ (that is, for $\frac1{p_\theta}=\frac{1-\theta}{p_0}+\frac{\theta}{p_1}$ and $\frac1{q_\theta}=\frac{1-\theta}{q_0}+\frac{\theta}{q_1}$, with $\theta\in(0,1)$). The case of endomorphisms, that is when $p_0=q_0$ and $p_1=q_1$, was considered in \cite{HST}. In particular, \cite[Theorem 4.2]{HST} asserts that for $p_0,p_1\in[1,\infty]$ if $T:L_{p_0}\rightarrow L_{p_0}$ is bounded and $T:L_{p_1}\rightarrow L_{p_1}$ is strictly singular, then $T:L_{p_\theta}\rightarrow L_{p_\theta}$ is actually compact for every $\theta\in(0,1)$. This interpolation result has been recently applied to the study of the rigidity of composition operators on Hardy spaces $H^p$  (see \cite{LNST}).

On the other hand, another extrapolation property of strict singularity  is given in Theorem \ref{t:extrapol}: Under certain assumptions on the indices $p_0,p_1,q_0,q_1\in(1,\infty)$, if an operator $T:L_{p_i}\rightarrow L_{q_i}$ is  bounded  for $i=0,1$, and for some $0 <\theta <1 $, $T:L_{p_{\theta}}\rightarrow L_{q_{\theta}}$  is strictly singular, then we actually have that $T:L_{p_\tau} \rightarrow L_{q_\tau}$  is compact for every $\tau\in(0,1)$.

Recall that given an operator $T:L_\infty\rightarrow L_1$, its $L$-{\it characteristic } is the set $L(T)$ of those $(\frac1p,\frac1q)\in[0,1]\times[0,1]$ such that the operator $T:L_p\rightarrow L_q$ is bounded. This classical notion, formally introduced by M. A. Krasnoselskii and P. Zabreiko in \cite{ZK}, provides relevant information about the interpolation properties of the operator $T$. In particular, Riesz-Thorin  convexity theorem yields that the $L$-characteristic is always a convex set. The subsets of the unit square arising as the $L$-characteristic set of an operator were characterized in \cite{R}. Our contribution in this respect is Theorem \ref{t:boundary}, which allows us to identify points $(\frac1p,\frac1q)$ for which an operator $T:L_p\rightarrow L_q$ is strictly singular but not compact, as part of the boundary of the $L$-characteristic set of $T$.

The paper is organized as follows: After recalling some preliminaries on the subspaces of $L_p$ and operators in Section \ref{s:preliminar}, we will analyze the structure of the sets $V_{p,q}$ of strictly singular non-compact operators between $L_p-L_q$ spaces in Section \ref{s:vpq}. In this section, we also study the stability under duality of strict singularity and the relation with the $L$-characteristic set $L(T)$ of an operator $T$ showing the inclusion  \, $V(T) \subset  \partial L(T)$. Section \ref{s:extrapolation} is devoted to prove a useful extrapolation property of strictly singular operators between $L_p$ spaces. Our results yield, under certain conditions, which will later be shown to be necessary, that a bounded operator mapping $T:L_{p_0}\rightarrow L_{q_0}$ and $T:L_{p_1}\rightarrow L_{q_1}$, which is strictly singular as an operator $T:L_p\rightarrow L_q$ for some pair $(p,q)$ in the interior of the interpolation segment joining $(p_0,q_0)$ and $(p_1,q_1)$, then it is necessarily compact for every point in the whole segment. In Section \ref{s:interpolation}, we give the analogue of Krasnoselskii compact interpolation result for strictly singular operators (see  Theorem \ref{t:interpol}). Finally, Section \ref{s:sss} collects some facts about strictly singular operators from the space $L_\infty$ and also extends some previous results to more general Banach lattices (see Theorem \ref{t:ell2compact}).
\bigskip

\section{Preliminaries}\label{s:preliminar}

Throughout, for $1\leq p\leq \infty$, $L_p$ denotes the space $L_p[0,1]$ equipped with Lebesgue measure $\mu$. Given Banach spaces $X$, $Y$, let $L(X,Y)$ denote the space of bounded linear operators from $X$ to $Y$. Also let $K(X,Y)$ and $S(X,Y)$ denote the corresponding ideals of compact and strictly singular singular operators, respectively.

Recall that an order continuous Banach lattice with weak unit is order isometric to a dense ideal in $L_1(\Omega,\Sigma,\mu)$ for certain probability space (cf. \cite[Theorem 1.b.14]{LT2}).
In the understanding of subspaces of $L_p$, or more generally, order continuous Banach lattices, Kade\v{c}-Pe\l czy\'nski dichotomy provides a useful tool (\cite{KP}, see also \cite[Proposition 1.c.8]{LT2}):

\begin{theorem}\label{Kadec-Pelc}
Let $X$ be a separable subset of an order continuous Banach lattice $E$. The following alternative holds:
\begin{enumerate}
    \item Either $X$ is a strongly embedded subset, that is, the norms of $E$ and $L_1$ are equivalent restricted to $X$; or
    \item $X$ contains an almost disjoint normalized sequence, that is, there exists a normalized sequence $(x_n)\subset X$ such that $x_n=u_n+v_n$, where $(u_n)$ is a disjoint sequence and $\|v_n\|\rightarrow 0$.
    \end{enumerate}
\end{theorem}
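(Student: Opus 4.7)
The plan is to prove the dichotomy by showing that failure of (1) on $X$ forces (2). Using the hint, identify $E$ with a dense ideal of $L_1(\mu)$ for a probability measure $\mu$; the inclusion $E\hookrightarrow L_1$ is then continuous by the closed graph theorem, and up to a suitable rescaling of the weak unit we may assume $\chi_\Omega\in E$, so that $L_\infty\subset E$ continuously. Failure of (1) supplies a sequence $(x_n)\subset X$ with $\|x_n\|_E=1$ and $\|x_n\|_{L_1}\to 0$, and the task reduces to producing a subsequence of the form $x_{n_k}=u_k+v_k$ with $(u_k)$ pairwise disjoint and $\|v_k\|_E\to 0$.

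The disjoint parts come from thresholding. Pick a slowly decaying $\varepsilon_n\to 0$ so that $\mu(A_n)\to 0$ for $A_n:=\{|x_n|>\varepsilon_n\}$ — allowed by Chebyshev's inequality $\mu(A_n)\leq\|x_n\|_{L_1}/\varepsilon_n$. Decompose $x_n=x_n\chi_{A_n}+x_n\chi_{A_n^c}$. A diagonal extraction yields a subsequence $(n_k)$ along which the sets $A_{n_k}$ are essentially pairwise disjoint, so that $u_k:=x_{n_k}\chi_{A_{n_k}\setminus\bigcup_{j<k}A_{n_j}}$ is truly disjoint. The residual $v_k=x_{n_k}-u_k$ splits into two pieces: one $L_\infty$-bounded by $\varepsilon_{n_k}$, the other supported on the small-measure overlap $A_{n_k}\cap\bigcup_{j<k}A_{n_j}$.

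The main obstacle is to make $\|v_k\|_E\to 0$. The $L_\infty$-bounded piece is dispatched at once by $\|\cdot\|_E\leq C\|\cdot\|_\infty$ via the continuous inclusion $L_\infty\hookrightarrow E$. The overlap piece is harder, since $|x_{n_k}|$ restricted to a set of small measure need not have small $E$-norm on its own. The tool here is the absolute continuity of the $E$-norm, equivalent to order continuity: for every $f\in E$ and $\eta>0$ there exists $\delta>0$ with $\|f\chi_B\|_E<\eta$ whenever $\mu(B)<\delta$. The remedy is to pick the indices $n_k$ inductively, choosing $n_k$ so large that the $E$-mass of $x_{n_k}$ on the accumulated set $\bigcup_{j<k}A_{n_j}$ is less than $2^{-k}$, which requires coupling the already-chosen thresholds $\varepsilon_{n_j}$, $j<k$, with the absolute continuity moduli. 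This simultaneous control of the two independent error sources is the technical heart of the argument. Once $\|v_k\|_E\to 0$ is secured, a standard Bessaga--Pe\l czy\'nski-type perturbation rewrites the truncated sequence as a genuinely disjoint normalized one, yielding alternative (2).
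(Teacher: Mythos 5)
The paper itself gives no proof of this statement --- it is quoted from Kade\v{c}--Pe\l czy\'nski with a pointer to \cite[Proposition 1.c.8]{LT2} --- so your attempt has to be measured against the standard argument. Your overall strategy (reduce to a dense ideal $L_\infty\subset E\subset L_1(\mu)$, take a normalized sequence with $\|x_n\|_{L_1}\to 0$, threshold at levels $\varepsilon_n\to 0$ so that $A_n=\{|x_n|>\varepsilon_n\}$ has small measure, and absorb the sub-threshold part into the error via $\|\cdot\|_E\le C\|\cdot\|_\infty$) is exactly the classical one. But the step you yourself call the technical heart is wrong as stated. You set $u_k=x_{n_k}\chi_{A_{n_k}\setminus\bigcup_{j<k}A_{n_j}}$, deleting from $x_{n_k}$ its mass on the supports of the \emph{earlier} terms, and propose to choose $n_k$ so large that $\|x_{n_k}\chi_{\bigcup_{j<k}A_{n_j}}\|_E<2^{-k}$ by absolute continuity of the norm. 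Absolute continuity says that for a \emph{fixed} $f\in E$ the quantity $\|f\chi_B\|_E$ is small when $\mu(B)$ is small; it says nothing about a \emph{varying} sequence of functions restricted to a \emph{fixed} set, which is what you need. The step genuinely fails: in $E=L_2[0,1]$ take $x_n=2^{n/2}\chi_{[0,2^{-n}]}$, so $\|x_n\|_2=1$, $\|x_n\|_1=2^{-n/2}\to 0$, and $A_n=[0,2^{-n}]$ for any thresholds $\varepsilon_n<2^{n/2}$. These sets are nested, so every later $x_{n_k}$ is supported inside $A_{n_1}$; hence $\|x_{n_k}\chi_{\bigcup_{j<k}A_{n_j}}\|_2=1$ for all $k\ge 2$, no choice of indices helps, your $u_k$ vanish identically and the $v_k$ keep norm one. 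The same example shows that no diagonal extraction makes the $A_{n_k}$ ``essentially pairwise disjoint.''

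The repair is to disjointify in the opposite direction, against the \emph{later} supports. Choose the subsequence inductively so that, writing $B_k=\bigcup_{j>k}A_{n_j}$, one has $\mu(B_k)<\delta_k$, where $\delta_k$ is an absolute-continuity modulus valid simultaneously for the finitely many already-chosen functions $x_{n_1},\ldots,x_{n_k}$; this is achievable because $\mu(A_n)\to 0$, so all future sets can be forced to have arbitrarily small total measure. The sets $A_{n_k}\setminus B_k$ are then automatically pairwise disjoint (for $j<k$ one has $A_{n_k}\subset B_j$), and $u_k=x_{n_k}\chi_{A_{n_k}\setminus B_k}$ works, since the error $x_{n_k}\chi_{B_k}$ is now a fixed function restricted to a set of measure less than $\delta_k$ --- exactly what order continuity controls. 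With this reversal, plus the routine reduction to the band generated by the separable set $X$ so that a weak unit exists, your argument becomes the standard proof; the closing Bessaga--Pe\l czy\'nski perturbation is not even needed, since alternative (2) only asks for the decomposition $x_{n_k}=u_k+v_k$ with $\|v_k\|\to 0$.
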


In particular, for $p>2$ this result yields that every normalized weakly null sequence in $L_p$ has a subsequence which is equivalent either to the unit basis of $\ell_p$ or $\ell_2$ (cf. \cite[Corollary 5]{KP}.

Next result, due to L. Dor \cite{Dor} (cf. \cite[Theorem 44]{AO}), also provides relevant information about the subspaces of $L_p$ which are isomorphic to $\ell_p$:

\begin{theorem}\label{Dor}
Let $1\leq p\neq 2<\infty$, $0<\theta\leq1$, and a sequence $(f_i)_{i=1}^\infty$ in $L_p$. Assume that either:
\begin{enumerate}
\item {$1\leq p<2,\, \|f_i\|\leq1$ for $i\in\mathbb N$, and for every $n\in \mathbb{N}$ and scalars $(a_i)_{i=1}^n$, we have  $$\|\sum_{i=1}^n a_if_i\|_p\geq\theta(\sum_{i=1}^n|a_i|^p)^{1/p},$$}
\item {or $2<p<\infty,\, \|f_i\|\geq1$ for $i\in\mathbb N$, and for every $n\in \mathbb{N}$ and scalars $(a_i)_{i=1}^n$, we have $$\|\sum_{i=1}^n a_if_i\|_p\leq\theta^{-1}(\sum_{i=1}^n|a_i|^p)^{1/p}.$$}
\end{enumerate}
Then there exist disjoint measurable sets $(A_i)_{i=1}^\infty$ in $[0,1]$ such that for every $i\in\mathbb N$ $$\|f_i\,\chi_{A_i}\|_p\geq\theta^{2/|p-2|}.$$
\end{theorem}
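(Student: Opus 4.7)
The two cases are dual: if $(f_i) \subset L_p$ satisfies the lower $\ell_p$-bound of (1), then by Hahn--Banach one can extend the coordinate functionals on $\mathrm{span}(f_i)$ to obtain biorthogonal $(g_i) \subset L_{p'}$ (with $p' > 2$) satisfying an upper $\ell_{p'}$-type bound with constant $\theta^{-1}$, and symmetrically for (2). However, the disjoint sets produced by this duality localize the $g_i$ rather than the $f_i$, so an additional H\"older step would be needed to transfer the conclusion back. For clarity, my plan is to prove case (2) directly and then deduce case (1) via this duality (with a H\"older transfer) or by repeating the argument in mirror image.

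For case (2), normalize so that $\|f_i\|_p = 1$; the upper bound is preserved a fortiori. By a standard diagonal/exhaustion argument it suffices to handle finite sequences $f_1, \ldots, f_n$ and produce disjoint $A_i$ with $\int_{A_i} |f_i|^p \geq \theta^{2p/(p-2)}$. I would attack this via an extremal argument: among all measurable partitions $(B_1,\ldots,B_n)$ of $[0,1]$, maximize the functional $\min_i \int_{B_i} |f_i|^p$, and let $c^p$ denote the optimum, attained at some $(A_i)$. Existence of the maximizer follows from weak$^*$ compactness of indicator tuples in $L_\infty^*$ together with upper semicontinuity of the minimum of integrals.

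The heart of the argument is to combine the extremality of $(A_i)$ with the upper $\ell_p$-bound to derive $c \geq \theta^{2/(p-2)}$. At the optimum, a first-variation (Karush--Kuhn--Tucker-type) condition forces the normalized dual densities $|f_i|^{p-1}\mathrm{sgn}(f_i)\chi_{A_i}/\|f_i \chi_{A_i}\|_p^{p-1}$ to be pointwise dominant, in a weighted sense involving Lagrange multipliers that equalize the $\min$. Testing the hypothesis $\|\sum a_i f_i\|_p \leq \theta^{-1}(\sum|a_i|^p)^{1/p}$ against coefficients $a_i$ matched to these dual functionals, and using H\"older to interpolate the $L_p$-control against the $L_2$-type pointwise behavior of $\sum_i |f_i|^2$ on the $A_i$, should yield a polynomial inequality of the shape $c^{\alpha} \leq \theta^{-1} c^{\beta}$ from which the claimed exponent $2/(p-2)$ falls out.

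The main obstacle will be this last step: identifying the correct dual coefficients and carrying out the interpolation to produce the sharp exponent $2/|p-2|$. That exponent is exactly the one produced by interpolating an $L_p$ bound against an $L_2$ bound obtained from Khintchine's inequality applied to random signs $\sum\varepsilon_i f_i$, which hints at an alternative route: average the hypothesis over signs to obtain $\int (\sum|f_i|^2)^{p/2}\leq C\,\theta^{-p}n$, and then combine with the normalization $\sum\|f_i\|_p^p=n$ via a rearrangement or Paley-type extraction of disjoint supports. I would try both the variational and the Khintchine-based approach and retain whichever delivers the sharp constant; case (1) for $1 \leq p < 2$ would then be handled by the biorthogonal duality outlined in the first paragraph.
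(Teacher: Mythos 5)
First, a point of context: the paper does not prove this statement. Theorem \ref{Dor} is quoted as a known preliminary result of L.~Dor, with the proof deferred to \cite{Dor} and \cite[Theorem 44]{AO}, so there is no in-paper argument to compare yours against and your outline has to stand on its own. To your credit, the skeleton you propose for case (2) is genuinely the right one and matches Dor's actual strategy: one maximizes $\min_i\int_{B_i}|f_i|^p$ over (possibly fuzzy) partitions, passes by a minimax theorem to $\min_{\lambda}\int\max_i\lambda_i|f_i|^p$ over convex weights $\lambda_i\geq 0$, $\sum_i\lambda_i=1$ --- your ``Lagrange multipliers that equalize the min'' are exactly these weights, and for fixed $\lambda$ the optimal partition is the greedy one with respect to $\lambda_i|f_i|^p$ --- and then bounds $\int\max_i\lambda_i|f_i|^p$ from below by testing the $\ell_p$-estimate against coefficients built from $\lambda$ and applying H\"older between the $L_p$- and $L_2$-scales, which is precisely where $2/|p-2|$ comes from. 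But that last estimate is the \emph{only} nontrivial step of the whole proof, and you leave it at ``should yield a polynomial inequality''; as written the proposal is a plan, not a proof.

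Beyond that acknowledged gap there is one step that would actually fail: the reduction of case (1) to case (2) by duality. Hahn--Banach does give biorthogonal functionals $g_i\in L_{p'}$ with $\|g_i\|_{p'}\leq\theta^{-1}$, but nothing forces $(g_i)$ to satisfy an upper $\ell_{p'}$-estimate; such an estimate, combined with the lower $\ell_p$-estimate on $(f_i)$, is exactly what makes $\sum_i g_i\otimes f_i$ a bounded projection onto $[f_i]$, i.e.\ it is essentially equivalent to the complementation that Dor's theorem is designed to produce. So the reduction is close to circular, and indeed the case $1\leq p<2$ (the harder one) is treated directly in \cite{Dor, AO}, with the same minimax step and only the direction of the H\"older inequality changed (exponents $2/p$ and $2/(2-p)$ instead of $p/2$ and $p/(p-2)$). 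Two smaller issues: the passage from finite to infinite families is not a routine diagonal argument, since weak-$*$ limits of indicator tuples are only sub-partitions $0\leq\phi_i$, $\sum_i\phi_i\leq1$, and one must check no constant is lost in returning to genuine disjoint sets; and the Khintchine-averaging route you float as an alternative inevitably introduces a Khintchine constant, whereas the stated conclusion has the clean constant $\theta^{2/|p-2|}$, so it cannot deliver the theorem as stated.
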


Recall the following interpolation property of compact operators between $L_p$ spaces due to M. A. Krasnoselskii (\cite{Krasnoselskii}, see also \cite[Theorem IV.2.9]{BS}, \cite{KZPS}):

\begin{theorem}\label{Krasnoselskii}
Let $1\leq p_0,p_1,q_0,q_1\leq \infty$. If $T:L_{p_0}\rightarrow L_{q_0}$ is a compact operator and $T:L_{p_1}\rightarrow L_{q_1}$ is bounded, then $T:L_{p_\theta}\rightarrow L_{q_\theta}$ is compact, where $\frac1{p_\theta}=\frac{1-\theta}{p_0}+\frac{\theta}{p_1}$ and $\frac1{q_\theta}=\frac{1-\theta}{q_0}+\frac{\theta}{q_1}$, for every $\theta\in(0,1)$.
\end{theorem}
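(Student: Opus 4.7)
The strategy is to approximate the compact operator $T:L_{p_0}\to L_{q_0}$ by a sequence of finite-rank operators and then transfer the approximation to $L_{p_\theta}\to L_{q_\theta}$ by applying the Riesz--Thorin convexity theorem to the error. Since the compact operators form a closed ideal, this reduces the proof to producing such an approximating sequence that simultaneously stays uniformly bounded as a family of operators from $L_{p_1}$ to $L_{q_1}$.

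To construct the approximation I would use conditional expectations. Fix a refining sequence of finite $\sigma$-algebras $\mathcal{F}_n$ on $[0,1]$ whose union generates the Borel $\sigma$-algebra, and let $E_n$ be the corresponding conditional expectation. Each $E_n$ is a contraction on every $L_q$, $1\leq q\leq \infty$, and by the martingale convergence theorem $E_n\to I$ strongly on $L_q$ for $q<\infty$. Assuming first that $q_0<\infty$, set $T_n:=E_nT$. Then $T_n$ is of finite rank; as an operator $L_{p_1}\to L_{q_1}$ it satisfies $\|T_n\|_{p_1,q_1}\leq \|T\|_{p_1,q_1}$; and as an operator $L_{p_0}\to L_{q_0}$ we have $\|T-T_n\|_{p_0,q_0}\to 0$, because $T(B_{L_{p_0}})$ is relatively compact in $L_{q_0}$ and the strong convergence plus uniform boundedness of $(E_n-I)$ forces uniform convergence to zero on compact subsets of $L_{q_0}$.

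Applying Riesz--Thorin to the operator $T-T_n$ then gives
\[
\|T-T_n\|_{p_\theta,q_\theta}\;\leq\; \|T-T_n\|_{p_0,q_0}^{1-\theta}\,\|T-T_n\|_{p_1,q_1}^{\theta}\;\leq\;\|T-T_n\|_{p_0,q_0}^{1-\theta}\,(2\|T\|_{p_1,q_1})^{\theta}\;\longrightarrow\;0.
\]
Each $T_n$ has finite rank, hence is compact as a map $L_{p_\theta}\to L_{q_\theta}$, so $T$ is a norm limit of compact operators at the interpolated indices and is therefore itself compact.

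The principal obstacle is the boundary case $q_0=\infty$, where $E_n\to I$ fails in the strong operator topology on $L_\infty$ and the approximation scheme above breaks down. I would handle it by duality: Schauder's theorem gives that $T^{*}:L_1\to L_{p_0'}$ is compact and $T^{*}:L_{q_1'}\to L_{p_1'}$ is bounded, and now the compact endpoint has the finite index $q_0'=1$, so the argument of the previous paragraphs applies to $T^{*}$. A second application of Schauder then transfers compactness back to $T$ on $L_{p_\theta}\to L_{q_\theta}$. If instead it is $p_0=\infty$ that causes trouble (so that $T^{*}$ does not land in an honest $L_p$ space after dualizing), one symmetrically approximates by $TE_n$ on the domain side, exploiting the strong convergence $E_n\to I$ on $L_{p_0}$ via the compactness of $T^{*}(B_{L_{q_0}^{*}})$. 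In every configuration one of these three variants of the approximation is available, and Riesz--Thorin closes the argument.
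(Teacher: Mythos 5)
The paper does not prove this theorem: it is quoted from Krasnoselskii with references to \cite{BS} and \cite{KZPS}, and the introduction explicitly describes the classical proof as resting on approximation of the compact operator by finite-rank ones --- which is exactly your scheme, so you are on the standard track. Your core argument is sound: with $E_n$ the conditional expectations onto a refining sequence of finite $\sigma$-algebras, $E_nT$ is finite rank, $\|E_nT\|_{p_1,q_1}\leq\|T\|_{p_1,q_1}$ because $E_n$ is a simultaneous contraction on all $L_q$, and $\|(I-E_n)T\|_{p_0,q_0}\to 0$ because a uniformly bounded, strongly null sequence of operators converges uniformly on the relatively compact set $T(B_{L_{p_0}})$; Riesz--Thorin applied to $T-E_nT$ (with the harmless constant in the real-scalar case) then closes the argument whenever $q_0<\infty$, for any $p_0$.

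The endpoint bookkeeping, however, is not correct as written. In the dual variant the relevant finiteness is not that of the new domain index $q_0'=1$ but that of the new \emph{target} index $p_0'$, since the conditional expectations must act on the target of the compact endpoint of $T^*$; so the duality route requires $p_0>1$ (equivalently $p_0'<\infty$), and one must restrict the adjoint to $L_1\subset(L_\infty)^*$ and check that the various adjoints agree on $L_{q_0'}\cap L_{q_1'}$. Your third variant $TE_n$, invoked for $p_0=\infty$ and justified by ``the strong convergence $E_n\to I$ on $L_{p_0}$'', does not work: that convergence fails on $L_\infty$, and the dual justification lands in $(L_\infty)^*$, where $E_n^*\mu\to\mu$ only for countably additive $\mu$. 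Fortunately this variant is never needed, since $p_0=\infty$ forces $p_0'=1<\infty$ and the duality route applies. The one configuration genuinely outside all your variants is $p_0=1$, $q_0=\infty$, which is not covered but is immediate: $T$ factors as $L_{p_\theta}\hookrightarrow L_1\xrightarrow{T}L_\infty\hookrightarrow L_{q_\theta}$ through a compact map. With the case analysis repaired to: use $E_nT$ when $q_0<\infty$, dualize and use $E_nT^*$ when $q_0=\infty$ and $p_0>1$, and the trivial factorization when $p_0=1$ and $q_0=\infty$, your proof is complete.
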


Note that an analogous result for interpolating strictly singular operators does not hold in general. Indeed, one can consider the formal inclusion $T:L_\infty\rightarrow L_1$, which is strictly singular by a classical result of Grothendieck (cf. \cite[Theorem 5.2]{Ru}, see also Section \ref{s:sss}) and $T:L_1\rightarrow L_1$ is bounded. However, for $1<p<\infty$, $T:L_p\rightarrow L_1$ is invertible on the span of the Rademacher functions, thus it is not strictly singular. Apparently, positive results for one-sided interpolation of strictly singular operators are only known in the degenerated case when the initial couple reduces to one single space (see \cite[Proposition 2.1]{Beucher}, \cite{CMMM}, \cite[Proposition 1.6]{Heinrich}). Nevertheless, the following was given in \cite[Theorem 4.2]{HST} for endomorphisms on $L_p$ spaces:

\begin{theorem}\label{interpolationPAMS}
Let $1\leq r,s\leq \infty$, $r\neq s$ and $T:L_s\rightarrow L_s$ be a bounded operator. If $T\in S(L_r,L_r)$, then $T\in K(L_p,L_p)$ for every $p$ between $r$ and $s$.
\end{theorem}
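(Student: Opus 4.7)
The plan is to argue by contradiction: suppose $T\colon L_p\to L_p$ fails to be compact for some $p$ strictly between $r$ and $s$. Reflexivity of $L_p$ (for $1<p<\infty$) yields a normalized weakly null sequence $(f_n)\subset L_p$ with $\delta:=\inf_n \|Tf_n\|_p>0$; the boundary indices $1,\infty$ are reduced to the interior case separately. Taking adjoints if needed (strict singularity on $L_p$ for $1<p<\infty$ is preserved under duality), I may assume $r<p<s$.

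The structural step is to apply the Kade\v{c}--Pe\l czy\'nski dichotomy (Theorem~\ref{Kadec-Pelc}) to both $(f_n)$ and $(Tf_n)$. After passing to further subsequences and small perturbations, the main case is when both sequences are normalized disjoint in $L_p$. Then $[f_n]\cong[Tf_n]\cong\ell_p$ and $T$ is an $L_p$-isomorphism between them. Rescaling $\tilde f_n=f_n/\|f_n\|_r$ gives a normalized disjoint sequence in $L_r$ spanning a copy of $\ell_r$. Strict singularity of $T$ on $L_r$ then produces norm-one vectors $g_k=\sum_i a_i^{(k)}\tilde f_i$ in $L_r$ with $\|Tg_k\|_r\to 0$. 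Combining the log-convexity inequality
\[
\|Tg_k\|_p\leq \|Tg_k\|_s^{1-\theta}\|Tg_k\|_r^{\theta}\leq \|T\|_{L_s\to L_s}^{1-\theta}\|g_k\|_s^{1-\theta}\|Tg_k\|_r^{\theta},\qquad \tfrac{1}{p}=\tfrac{1-\theta}{s}+\tfrac{\theta}{r},
\]
with the lower bound $\|Tg_k\|_p\geq c\,\|g_k\|_p$ coming from the $\ell_p$-isomorphism in $L_p$, and with Jensen's inequality $\|g_k\|_p\geq \|g_k\|_r=1$ (valid on a probability space when $r<p$), one obtains $c\leq C\|g_k\|_s^{1-\theta}\|Tg_k\|_r^{\theta}$, which forces $\|g_k\|_s\to\infty$ at a rate at least as fast as $\|Tg_k\|_r^{-\theta/(1-\theta)}$. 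Dor's theorem (Theorem~\ref{Dor}) is the tool used to prevent this: after one more refinement it anchors the ratios $\|f_n\|_r/\|f_n\|_p$ and $\|f_n\|_s/\|f_n\|_p$ at uniform constants, so $\|g_k\|_s$ is controlled by a quantity comparable to $\|g_k\|_r$, and the preceding inequality collapses into the desired contradiction.

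The remaining Kade\v{c}--Pe\l czy\'nski cases, in which $(f_n)$ or $(Tf_n)$ is strongly embedded in $L_1$, are treated by extracting an $\ell_2$-equivalent subsequence (valid in any strongly embedded subspace of $L_p$ for $p>2$; the case $p<2$ reduces to this by duality). Strong embeddedness makes this $\ell_2$-equivalence persist in every $L_q$-norm with $q\leq p$, and in particular in $L_r$, so the same interpolation inequality produces a copy of $\ell_2$ in $L_r$ on which $T$ is bounded below, contradicting $T\in S(L_r,L_r)$.

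The step I expect to be the main obstacle is the book-keeping in the disjoint case: the disjoint lattice structure in $L_p$ does not automatically deliver matching norm comparisons in $L_r$ and $L_s$, because the raw norms $\|f_n\|_r$ and $\|f_n\|_s$ can behave wildly. The systematic use of Dor's theorem to pin down the relative sizes of $\|f_n\|_p,\|f_n\|_r,\|f_n\|_s$ is what makes the H\"older-type interpolation combine correctly with the $\ell_p$-lower bound. The boundary indices and the special behaviour near $p=2$ will need a little separate care.
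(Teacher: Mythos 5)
Your architecture --- contradiction, Kade\v{c}--Pe\l czy\'nski applied to both $(f_n)$ and $(Tf_n)$, and log-convexity of the $L_q$-norms --- is a reasonable starting point, but the two cases that carry the real difficulty are not closed. The first is the mixed case where $(f_n)$ is strongly embedded (equivalent to the $\ell_2$ basis) while $(Tf_n)$ is almost disjoint (equivalent to the $\ell_p$ basis, $p>2$). Your claim that this ``produces a copy of $\ell_2$ in $L_r$ on which $T$ is bounded below'' fails: since $\|\sum a_nTf_n\|_p\approx\|a\|_{\ell_p}$ while $\|\sum a_nf_n\|_p\approx\|a\|_{\ell_2}$, the operator $T$ is not bounded below on $[f_n]$ in any norm (test $a=N^{-1/2}(1,\dots,1)$), so no contradiction with $T\in S(L_r,L_r)$ can be extracted this way. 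This is exactly the case for which the machinery of Proposition~\ref{lemma-29-2} and part (a) of Theorem~\ref{t:extrapol} exists: one first extracts, via \cite{SS}, an $\ell_2$-sequence $(y_k)$ with $(|y_k|)$ \emph{equi-measurable}; equi-measurability provides a uniform level at which each $y_k$ splits into a piece bounded in $L_\infty$ (hence in $L_s$) and a piece uniformly small in $L_p$, and only then do Dor's theorem (applied to the \emph{image} sequence), $\mu(A_k)\to0$ and H\"older produce the contradiction. This is also the only place where the boundedness of $T$ on the second endpoint $L_s$ genuinely enters; your sketch never uses it in this case.

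The second gap is in your ``main'' disjoint case: the assertion that Dor's theorem ``anchors the ratios $\|f_n\|_r/\|f_n\|_p$ and $\|f_n\|_s/\|f_n\|_p$ at uniform constants'' is not what Dor's theorem says, and the statement is in fact false. Dor's theorem yields disjoint sets $A_n$ with $\|f_n\chi_{A_n}\|_p\geq c_0$; since $\mu(A_n)\to0$, H\"older gives $\|f_n\|_s\geq c_0\,\mu(A_n)^{-(1/p-1/s)}\to\infty$ for $s>p$ (and $f_n$ need not even lie in $L_s$), so the ratio $\|f_n\|_s/\|f_n\|_p$ necessarily blows up --- the opposite of what you need. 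Without a uniform bound on $\|g_k\|_s$, the inequality $c\leq C\|g_k\|_s^{1-\theta}\|Tg_k\|_r^{\theta}$ is vacuous and the contradiction never materializes. A further, smaller, issue: the reductions ``WLOG $r<p<s$'' and ``the case $p<2$ reduces to $p>2$ by duality'' cannot both be made, because dualizing reverses the order of the indices and moves the strictly singular endpoint to the other side of $p$; the two configurations need separate treatment. Note finally that the paper does not actually prove this statement (it is quoted from \cite[Theorem 4.2]{HST}); the closest in-paper arguments are Proposition~\ref{lemma-29-2} and Theorem~\ref{t:extrapol}, and they exhibit precisely the ingredients --- equi-measurability, truncation, and Dor applied to the image --- that your sketch is missing.
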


Recall that an operator $T:E\rightarrow X$ defined from a Banach lattice $E$ to a Banach space $X$ is called \emph{AM-compact} when $T[-f,f]$ is relatively compact for every $f\in E_+$, where $[-f,f]=\{x\in E:|x|\leq f\}$. Also, $T:E\rightarrow X$ is called \emph{M-weakly compact} if $\|Tx_n\|\rightarrow 0$ for every disjoint norm-bounded sequence $(x_n)\subset E$.

Given an operator $T:L_\infty\rightarrow L_1$, its $L$-{\it characteristic} (cf. \cite{KZPS, ZK}) is the set
$$
L(T)=\Big\{\Big(\frac1p,\frac1q\Big)\in[0,1]\times[0,1]:T\in L(L_{p},L_{q})\Big\}.
$$
It is clear that if $(\alpha_1,\beta_1)\in L(T)$, then $(\alpha,\beta)\in L(T)$ whenever $0\leq \alpha \leq\alpha_1$ and $\beta_1\leq \beta\leq 1$. Riesz-Thorin  interpolation theorem asserts that $L(T)$ is a convex subset of $[0,1]\times[0,1]$. Actually, the subsets of $[0,1]\times[0,1]$ which can arise as the $L$-characteristic set of some operator are exactly the $F_\sigma$-sets with the above properties (see \cite{R}).

We follow standard notation concerning Banach spaces and Banach lattices. For any unexplained terminology the reader is referred to the monographs \cite{AK,LT1,LT2,MN}.
\bigskip

\section{Strictly singular non-compact operators}\label{s:vpq}

Given $1\leq p,q\leq \infty$, let us consider the set
$$
V_{p,q}=S(L_p,L_q) \backslash  K(L_p,L_q).
$$
The following result was announced in \cite{STH}:

\begin{theorem}\label{t:dokl}
Let $p,q\in[1,\infty)$.Then $V_{p,q}=\emptyset$ if and only if $p \geq 2 \geq q$.
\end{theorem}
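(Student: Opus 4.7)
The plan is to prove the two implications separately.

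For the ``if'' direction, assume $p\geq 2\geq q$ and let $T\in S(L_p,L_q)$; I want to conclude $T$ is compact. The case $p=q=2$ is classical, since strict singularity coincides with compactness between Hilbert spaces. Otherwise $p>q$ with $p\geq 2$ and $q\leq 2$. Given a bounded sequence $(f_n)\subset L_p$, reflexivity lets me assume $(f_n)$ is weakly null. By Theorem~\ref{Kadec-Pelc} and, in the almost-disjoint case, Theorem~\ref{Dor}, some subsequence of $(f_n)$ is equivalent to the unit vector basis of $\ell_p$ or of $\ell_2$ (the latter via the corollary of Kade\v{c}-Pe\l czy\'nski for $p\geq 2$ recalled right after Theorem~\ref{Kadec-Pelc}). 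I then argue by contradiction: if $\|Tf_n\|_q$ were bounded below, then $(Tf_n)$ would be normalized and weakly null in $L_q$, so a second application of the Kade\v{c}-Pe\l czy\'nski dichotomy, now in $L_q$ with $q\leq 2$, produces a subsequence of $(Tf_n)$ equivalent to the basis of $\ell_q$ or of $\ell_2$. Matching domain-range bases yields four combinations; three of them force an inclusion-type inequality $\|a\|_s\lesssim\|a\|_r$ with $(r,s)\in\{(p,q),(p,2),(2,q)\}$, each of which fails for $p>2$ or $q<2$, and the remaining combination, where both $(f_n)$ and $(Tf_n)$ are $\ell_2$-equivalent, makes $T$ an isomorphism on an $\ell_2$-copy, contradicting strict singularity.

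For the ``only if'' direction I prove the contrapositive: if $p<2$ or $q>2$, I exhibit an element of $V_{p,q}$. For $p<2$, fix a disjoint sequence of positive-measure sets $(A_n)\subset[0,1]$, let $P:L_p\to L_p$ be the conditional expectation onto $\overline{\mathrm{span}}\{\chi_{A_n}\}$ (an $\ell_p$-copy), let $\iota:\ell_p\hookrightarrow\ell_2$ be the bounded formal inclusion (using $p\leq 2$), and let $J:\ell_2\to L_q$ be the Khintchine isomorphism onto the closed Rademacher subspace of $L_q$. The composition $T=J\iota P$ sends $\chi_{A_n}/\mu(A_n)^{1/p}$ to a Rademacher function, so it is not compact; strict singularity follows from the standard alternative on any infinite-dimensional $M\subset L_p$, namely either $P(M)$ is infinite-dimensional (on which $T|_M$ is not bounded below, since $\iota$ is strictly singular because $\ell_p\not\hookrightarrow\ell_2$ and every infinite-dimensional subspace of $\ell_p$ contains an $\ell_p$-copy) or $M\cap\ker P$ is infinite-dimensional (on which $T$ vanishes). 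When $q>2$ the symmetric construction uses the Rademacher projection $L_p\to\mathrm{Rad}(L_p)\cong\ell_2$, the bounded formal inclusion $\ell_2\hookrightarrow\ell_q$ (strictly singular since $\ell_2\not\hookrightarrow\ell_q$ for $q>2$ by Pitt's theorem), and a disjoint embedding $\ell_q\hookrightarrow L_q$.

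The main obstacle is the ``if'' direction, specifically the second invocation of the Kade\v{c}-Pe\l czy\'nski dichotomy in $L_q$ when $q<2$: I need to know that a normalized weakly null sequence in $L_q$ for $1\leq q<2$ has a subsequence equivalent to the basis of $\ell_q$ or of $\ell_2$, where the second alternative corresponds to the strongly embedded case. Unlike the analogous statement for $L_p$ with $p\geq 2$ explicitly recorded after Theorem~\ref{Kadec-Pelc}, in the range $q<2$ this requires additional input—typically a cotype-$2$ plus Khintchine-type argument showing that strongly embedded subspaces of $L_q$ are Hilbertian—and the boundary case $q=1$ has to be handled by specific $L_1$-arguments (Dunford-Pettis / equi-integrability). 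Once this ingredient is in place, the case analysis by inclusion inequalities runs smoothly, and the converse direction reduces to bookkeeping with three standard building blocks: disjoint/Rademacher projections, the $\ell_p\hookrightarrow\ell_2$ or $\ell_2\hookrightarrow\ell_q$ formal inclusions, and disjoint embeddings into $L_q$.
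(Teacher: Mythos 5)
Your ``only if'' direction is correct and is exactly the paper's construction: for $p<2$, a projection onto a disjoint $\ell_p$-copy followed by the formal inclusion $\ell_p\hookrightarrow\ell_2$ and the Rademacher embedding into $L_q$; for $q>2$, the symmetric composition through $\ell_2\hookrightarrow\ell_q$. The gap is in the ``if'' direction, at precisely the step you flag as the main obstacle, and the fix you propose does not exist: it is \emph{false} that a normalized weakly null sequence in $L_q$ with $1\le q<2$ has a subsequence equivalent to the unit vector basis of $\ell_q$ or of $\ell_2$, and it is equally false that strongly embedded subspaces of $L_q$ are Hilbertian in this range. For any $q<r<2$ a sequence of i.i.d.\ $r$-stable random variables spans a strongly embedded copy of $\ell_r$ in $L_q$, so the ``strongly embedded'' horn of Theorem \ref{Kadec-Pelc} produces subspaces isomorphic to $\ell_r$ for every $r\in(q,2]$ (and not only these); the Hilbertian conclusion in the strongly embedded case is special to $q>2$. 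Consequently your four-way matching of bases in domain and range does not exhaust the possibilities, and no cotype-$2$-plus-Khintchine argument will restore the dichotomy you want.

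The paper circumvents this in two ways. When the domain sequence is equivalent to the $\ell_p$ basis it never identifies the image sequence at all: it bounds the randomized averages $\int_0^1\bigl\|\sum_{k\le n} r_k(t)Tx_{i_k}\bigr\|_q\,dt$ from below by $c_1n^{1/2}$ using only cotype $2$ of $L_q$ (valid for all $q<\infty$, including $q=1$, so no separate $L_1$ case is needed) and from above by $c_2n^{1/p}$ using the $\ell_p$-equivalence in the domain, which is already a contradiction for $p>2$. When the domain sequence is equivalent to the $\ell_2$ basis it invokes \cite[Proposition 2.1]{FHKT}; the proof of that result (essentially reproduced here in Theorem \ref{t:ell2compact}) splits according to whether $\|Tx_k\|_1$ is bounded below --- in which case the Aldous--Fremlin theorem supplies a lower $\ell_2$-estimate with no claim that $[Tx_k]$ is Hilbertian --- or $(Tx_k)$ has an almost disjoint subsequence, in which case the lower $2$-estimate of $L_q$ applies. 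If you wish to keep your two-dichotomies architecture, you must replace the second Kade\v{c}--Pe\l czy\'nski application by this Aldous--Fremlin/disjointness alternative; as written, your argument does not close for $q<2$.
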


\begin{proof}
If $p<2$ or $q>2$, we can consider respectively the operators $T$ and $S$ defined by
$$
\xymatrix{L_p\ar_{P_p}[d]\ar^T[rr]&&L_q&&L_p\ar_{P_{rad}}[d]\ar^S[rr]&&L_q\\
\ell_p\ar@{^{(}->}[rr]&&\ell_2 \ar_{J_{rad}}[u]&&\ell_2\ar@{^{(}->}[rr]&&\ell_q \ar_{J_q}[u] }
$$
where $P_p$, $P_{rad}$ are projections onto the closed linear span of disjointly supported functions in $L_p$ and respectively the closed linear span of the Rademacher functions, and $J_{rad}$, $J_q$ the embeddings via the Rademacher functions and a sequence of normalized disjointly supported functions in $L_q$. The operators $T$ and $S$ are respectively strictly singular and non-compact.

Let $p\geq 2\geq q$ and suppose $T\notin K(L_p,L_q)$. Then there exist a sequence $(x_i)_{i\in \mathbb N}$ in $L_p$ and $\lambda>0$ such that $\|x_i\|_p=1$, $x_i\overset{w}{\rightarrow} 0$ and $\|Tx_i\|_q\geq\lambda$ for every $i\in\mathbb N$. By \cite{KP}, there is a subsequence $(x_{i_k})$ which is equivalent to the unit basis of $\ell_p$ or $\ell_2$.

If $(x_{i_k})$ were equivalent to the unit basis of $\ell_p$, then since $L_q$ has cotype 2, there exist constants $c_1,c_2>0$ such that for every $n\in\mathbb N$ we have
$$
c_1 n^{\frac12}\leq\int_0^1\Big\|\sum_{k=1}^n r_k(t) Tx_{i_k}\Big\|_q dt\leq\|T\|\int_0^1\Big\|\sum_{k=1}^n r_k(t) x_{i_k}\Big\|_p dt\leq c_2 n^{\frac1p}.
$$
Since $p\geq2$, this is a contradiction for large enough $n$. Hence, the sequence $(x_{i_k})$ must be equivalent to the unit basis of $\ell_2$. Now, by \cite[Proposition 2.1]{FHKT} it follows that $T$ is not strictly singular.
\end{proof}

Recall that an operator $T$ between Banach spaces is compact if and only if its adjoint $T^*$ is compact. In general, this is no longer true for strictly singular operators. However, for endomorphisms on $L_p$ spaces this duality holds (see \cite{M} and \cite{Weis:77}). We will address now the case when the operator is defined between different $L_p$ spaces. Recall that a Banach space $X$ is called subprojective when every subspace contains a further subspace which is complemented in $X$. The following is well-known:

\begin{proposition}\label{p:pleq2}
Let $p\in(1,2]$ and $X$ a Banach space. If $T:L_p\rightarrow X$ is strictly singular, then $T^*$ is strictly singular.
\end{proposition}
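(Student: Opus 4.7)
The plan is to argue by contradiction, using that $L_{p'}$ with $p'=p/(p-1)\in[2,\infty)$ is \emph{subprojective}---every infinite-dimensional closed subspace of $L_{p'}$ contains an infinite-dimensional subspace complemented in $L_{p'}$---together with the reflexivity of $L_p$. Subprojectivity of $L_{p'}$ for $p'\geq 2$ follows from the Kade\v{c}--Pe\l czy\'nski dichotomy (Theorem \ref{Kadec-Pelc}): an infinite-dimensional subspace of $L_{p'}$ either is strongly embedded, in which case it contains a copy of $\ell_2$ sitting complementedly in $L_{p'}$, or it contains an almost-disjoint normalized sequence, which spans a copy of $\ell_{p'}$ complemented in $L_{p'}$ by a disjoint-band projection.

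Assume for contradiction that $T^*\colon X^*\to L_{p'}$ is not strictly singular. Pick an infinite-dimensional closed subspace $M\subset X^*$ with $T^*|_M$ an isomorphic embedding, and apply subprojectivity to $T^*(M)$ to obtain an infinite-dimensional $N\subset T^*(M)$ together with a bounded projection $P\colon L_{p'}\to L_{p'}$ of range $N$. Writing $J\colon N\to X^*$ for the inverse of $T^*\colon (T^*|_M)^{-1}(N)\to N$, one has $T^*J=\mathrm{id}_N$; set $R:=J\circ P\colon L_{p'}\to X^*$, so that
\[
T^*R=P\quad\text{as operators } L_{p'}\to L_{p'}.
\]

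Taking adjoints and using the identifications $L_{p'}^*=L_p$ and $L_p^{**}=L_p$ (reflexivity), together with the standard relation $T^{**}\circ\iota_{L_p}=\iota_X\circ T$, the above identity yields $\widetilde{R}\circ T=P^*$ on $L_p$, where $\widetilde{R}:=R^*\circ\iota_X\colon X\to L_p$ and $P^*\colon L_p\to L_p$ is the adjoint projection. Since $P$ has infinite rank, so does the projection $P^*$; on its infinite-dimensional range $Y:=\mathrm{ran}(P^*)\subset L_p$ the map $\widetilde{R}$ is a left inverse of $T|_Y$, so $T|_Y$ is bounded below, contradicting the strict singularity of $T$.

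The main obstacle I anticipate is the duality bookkeeping: one has to keep track of the two uses of reflexivity, the canonical embedding $X\hookrightarrow X^{**}$, and the fact that $\mathrm{ran}(P^*)\subset L_p$ inherits infinite dimension from $N$ (via $\mathrm{ran}(P^*)\cong N^*$). The conceptual heart of the argument---subprojectivity of $L_{p'}$---is already within reach of the tools recalled in Section \ref{s:preliminar}.
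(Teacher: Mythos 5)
Your argument is correct and follows essentially the same route as the paper: the paper simply notes that $L_p^*=L_{p'}$ is subprojective and invokes Whitley's Corollary 2.3, which is exactly the duality argument you have written out in full (construct a projection onto a subspace of the range of $T^*$, dualize using reflexivity of $L_p$, and obtain a subspace of $L_p$ on which $T$ is bounded below). Your version is self-contained where the paper's is a citation, but the key idea --- subprojectivity of $L_{p'}$ for $p'\geq 2$ via the Kade\v{c}--Pe\l czy\'nski dichotomy --- is the same.
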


\begin{proof}
By \cite{KP}, the space $L_{p}^*$ is subprojective, hence the result follows from \cite[Corollary 2.3]{W}.
\end{proof}

\begin{theorem}\label{t:dualitySS}
Let $p,q\in(1,\infty)$. There is a strictly singular operator $T:L_p\rightarrow L_q$ such that $T^*$ is not strictly singular, if and only if $p>q>2$.
\end{theorem}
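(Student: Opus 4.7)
The plan is to prove the equivalence in two directions. For \emph{necessity}, assume $T\in S(L_p,L_q)$ with $T^*\notin S(L_{q^*},L_{p^*})$; we derive $p>q>2$ in three steps. First, $p>2$: otherwise Proposition \ref{p:pleq2} immediately gives $T^*\in S$. Second, $q>2$: otherwise, combined with $p>2$, we have $p\geq 2\geq q$, so by Theorem \ref{t:dokl}, $V_{p,q}=\emptyset$; hence $T$ is compact, so $T^*$ is compact and strictly singular, contradiction.

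To show $p>q$: suppose instead $p\leq q$. If $p=q$, the Milman--Weis duality of strict singularity for $L_p$-endomorphisms (cited before Proposition \ref{p:pleq2}) gives $T^*\in S$. If $p<q$ (both $>2$), the inclusion $i\colon L_q\hookrightarrow L_p$ (bounded on $[0,1]$ as $q>p$) yields $iT\in S(L_p,L_p)$; by Milman--Weis, $(iT)^*=T^*\circ i^*$ is strictly singular as an $L_{p^*}$-endomorphism, equivalent to $T^*|_{L_{p^*}}$ being strictly singular. To contradict $T^*\notin S$, take $V\subset L_{q^*}$ infinite-dimensional with $T^*|_V$ bounded below. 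By Kadec--Pelczynski (Theorem \ref{Kadec-Pelc}) in $L_{q^*}$ (with $q^*<2$), $V$ contains a subspace $V'$ isomorphic to either $\ell_{q^*}$ or $\ell_2$. The $\ell_{q^*}$-case is impossible: it would give an embedding $\ell_{q^*}\hookrightarrow T^*(V')\subset L_{p^*}$, contradicting the classical fact that $\ell_{q^*}$ embeds in $L_{p^*}$ only when $p^*\leq q^*$, whereas $p<q$ forces $p^*>q^*$. The $\ell_2$-case forces $V'\subset L_\infty\subset L_{p^*}$, and Khintchine's equivalence $\|\cdot\|_{q^*}\asymp\|\cdot\|_{p^*}$ on Rademacher sums makes $T^*|_{V'}$ bounded below in the $L_{p^*}$-norm, contradicting the strict singularity of $T^*|_{L_{p^*}}$.

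For \emph{sufficiency}, we construct $T\colon L_p\to L_q$ strictly singular with $T^*$ not strictly singular, for $p>q>2$. Since $p^*<q^*<2$, the classical $q^*$-stable embedding $\ell_{q^*}\hookrightarrow L_{p^*}$ is available: take an i.i.d.\ symmetric $q^*$-stable sequence $(\xi_n)\subset L_{p^*}$ satisfying $\|\sum c_n\xi_n\|_{p^*}\asymp\|(c_n)\|_{q^*}$. Coupled with a normalized disjoint sequence $(g_n)\subset L_q$ spanning a copy of $\ell_q$, a natural candidate is
\[
T(\phi)=\sum_n\Bigl(\int_0^1\phi\,\xi_n\,dt\Bigr)g_n.
\]
Its adjoint $T^*\colon L_{q^*}\to L_{p^*}$ acts as the $q^*$-stable embedding on the $\ell_{q^*}$-subspace of $L_{q^*}$ generated by the biorthogonals of $(g_n)$, and is therefore an isomorphism there, so $T^*$ is not strictly singular.

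\emph{Main obstacle.} The principal difficulty is verifying the strict singularity of $T$. Because $\ell_q$ embeds in $L_p$ as a complemented subspace (Rosenthal's construction, for $2<q<p$), a naive setup risks producing an $\ell_q$-copy in $L_p$ on which $T$ acts isomorphically, defeating strict singularity. Overcoming this likely requires either a refined construction (e.g., using a non-complemented $\ell_{q^*}$-embedding in $L_{p^*}$ or modifying $(\xi_n)$ so that its biorthogonals do not complement-split in $L_p$), or a detailed Kadec--Pelczynski case analysis on normalized weakly null $(\phi_n)\subset L_p$, forcing $\|T(\phi_n)\|_q\to 0$ along subsequences in both the almost-disjoint ($\ell_p$-type, where shrinking supports and the exponent mismatch drive the decay) and the Rademacher ($\ell_2$-type, where $q^*$-stable cancellation takes over) cases.
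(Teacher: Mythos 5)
Your overall case structure for necessity (first force $p>2$ via Proposition \ref{p:pleq2}, then $q>2$ via Theorem \ref{t:dokl}, then rule out $p=q$ via Milman--Weis) matches the paper, and your construction for sufficiency, $T\phi=\sum_n(\int_0^1\phi\,\xi_n)g_n$, is exactly the paper's operator $j_2j_1^*$, including the correct identification of $T^*$ as an isomorphism on the $\ell_{q'}$-span of the biorthogonals of $(g_n)$. However, the ``main obstacle'' you flag in the sufficiency direction rests on a false premise: $\ell_q$ does \emph{not} embed in $L_p$ for $2<q<p$ (every infinite-dimensional subspace of $L_p$, $p>2$, contains a copy of $\ell_p$ or $\ell_2$ by Kade\v{c}--Pe\l czy\'nski, while every infinite-dimensional subspace of $\ell_q$ contains $\ell_q$; Rosenthal's complemented subspaces of $L_p$ are the $X_p$ spaces, not copies of $\ell_q$). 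Precisely this observation finishes the proof in one line: $T$ factors through $j_1^*:L_p\to\ell_q$, which cannot be bounded below on any infinite-dimensional subspace of $L_p$, so $T$ is strictly singular. The step you leave open is therefore immediate, and no refined construction or equi-integrability analysis is needed.

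The genuine gap is in the necessity argument for the case $2<p<q$. You invoke an ``$\ell_{q^*}$ or $\ell_2$'' dichotomy for subspaces of $L_{q^*}$ with $q^*<2$, but that dichotomy holds only above $2$: Theorem \ref{Kadec-Pelc} gives ``strongly embedded or almost disjoint,'' and for $r<2$ a strongly embedded subspace of $L_r$ can be, for instance, a copy of $\ell_s$ for any $r<s\leq2$ (spanned by $s$-stable variables), containing neither $\ell_r$ nor $\ell_2$. So your case analysis does not exhaust the subspaces $V\subseteq L_{q^*}$ on which $T^*$ might be bounded below. Even in the genuine $\ell_2$ case, the claim $V'\subset L_\infty$ (or even $V'\subset L_{p^*}$) is unjustified --- an $\ell_2$-copy in $L_{q^*}$ need not consist of functions of higher integrability --- so the strict singularity of $T^*i^*$ on $L_{p^*}$ never reaches $V'$. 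The paper closes this case by a different and essential input: Johnson's factorization of the strictly singular $T$ through $\ell_q$ as $T=T_2T_1$ with $T_1:L_p\to\ell_q$, whence $T^*=T_1^*T_2^*$ and $T_1^*:\ell_{q'}\to L_{p'}$ is strictly singular because $L_{p'}$ contains no copy of $\ell_{q'}$ when $q'<p'<2$. You would need this factorization (or an equivalent structural fact about operators into $L_q$, $q>2$) to repair your argument; the reduction via the inclusion $L_q\hookrightarrow L_p$ and Milman--Weis does not suffice on its own.
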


\begin{proof}
If $p>q>2$, then $p'<q'<2$ so we can consider an isomorphic embedding $j_1:\ell_{q'}\rightarrow L_{p'}$ (using for instance $q'$-stable random variables). Let $j_2:\ell_q\rightarrow L_q$ be an isomorphic embedding generated by a sequence of disjointly supported functions, which span a complemented subspace, and let $T:L_p\rightarrow L_q$ be the operator given by $T=j_2 j_1^*$. Since $p>q>2$, by \cite[Corollary 5]{KP}, every subspace of $L_p$ contains a subspace isomorphic to $\ell_p$ or $\ell_2$, hence the operator $j_1^*:L_p\rightarrow \ell_q$ is strictly singular, and so is $T$. However, $T^*$ is an isomorphism on a subspace isomorphic to $\ell_{q'}$ hence it is not strictly singular.

Conversely, suppose now $p\leq q$ or $q\leq 2$. The case when $p\leq 2$ is contained in Proposition \ref{p:pleq2}. If $p\geq 2\geq q$, then by Theorem \ref{t:dokl}, every strictly singular operator $T:L_p\rightarrow L_q$ is compact, hence $T^*$ is also compact and in particular strictly singular. As mentioned above, the case $p=q$ has been given in \cite{M} and \cite{Weis:77}. Finally, if $q>p>2$, and $T:L_p\rightarrow L_q$ is strictly singular, then using \cite{J} we get the existence of $T_1:L_p\rightarrow \ell_q$ and $T_2:\ell_q\rightarrow L_q$ such that $T=T_2T_1$. Hence, we have $T^*=T_1^*T_2^*$. Note that $q'<p'<2$, so $L_{p'}$ does not contain any subspace isomorphic to $\ell_{q'}$ (cf. \cite[Proposition 6.4.3.]{AK}), thus  $T_1^*:\ell_{q'}\rightarrow L_{p'}$ is strictly singular and so is $T^*$.
\end{proof}

Recall that two measurable functions $f$ and $g$ are equi-measurable if for every $-\infty<\lambda<\infty$ the distribution functions satisfy
$$
\mu(\{s:f(s)> \lambda\})=\mu(\{s:g(s)>\lambda\}).
$$
In particular, given a measurable function $f$, we denote by $f^*$ its decreasing rearrangement:
$$
f^*(t)=\inf\Big\{\lambda>0:\mu\{s:|f(s)|>\lambda\}\leq t\Big\}.
$$

\begin{proposition}\label{lemma-29-2}
If $2<q\leq p<\infty$ and $T\in V_{p,q}$, then there exists a normalized sequence $(y_k)$ in $L_p$, which is equivalent to the unit vector basis of $\ell_2$ with $(|y_k|)$ equi-measurable, $(Ty_k)$ is equivalent to the unit vector basis of $\ell_q$, and the subspace $[y_k]$ is complemented.
\end{proposition}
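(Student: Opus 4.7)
The plan is to extract from a non-compactness witness a normalized weakly null sequence in $L_p$ on which $T$ is bounded below, and then refine it successively until all stated properties hold. Since $T:L_p\to L_q$ is non-compact and $L_p$ is reflexive, there exist $\lambda>0$ and a normalized weakly null sequence $(x_i)\subset L_p$ with $\|Tx_i\|_q\geq\lambda$ for every $i$. Because $p>2$, the consequence of Theorem \ref{Kadec-Pelc} recalled just after its statement yields a subsequence (still denoted $(x_i)$) equivalent either to the unit vector basis of $\ell_p$ or to that of $\ell_2$.

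I would first rule out the $\ell_p$ alternative. When $q<p$, this is a direct repetition of the cotype argument of Theorem \ref{t:dokl}: cotype $q$ of $L_q$ together with the upper $\ell_p$-estimate for $(x_i)$ gives
$$
\lambda n^{1/q}\leq C_q\int_0^1\Big\|\sum_{k=1}^n r_k(t)Tx_{i_k}\Big\|_q\,dt\leq C_q\|T\|\int_0^1\Big\|\sum_{k=1}^n r_k(t)x_{i_k}\Big\|_p\,dt\leq C\,n^{1/p},
$$
which fails for large $n$ since $1/q>1/p$. In the boundary case $q=p$, I would instead apply Kade\v{c}-Pe\l czy\'nski to the weakly null $(Tx_{i_k})\subset L_q=L_p$: an $\ell_p$-equivalent subsequence would make $T$ an isomorphism on the $\ell_p$-copy $[x_{i_k}]$, violating strict singularity, while an $\ell_2$-equivalent subsequence would force $c\,n^{1/2}\leq\|T\|\,C\,n^{1/p}$, absurd for $p>2$. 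Hence $(x_{i_k})\sim\ell_2$.

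A further application of Kade\v{c}-Pe\l czy\'nski to the normalized version of $(Tx_{i_k})$ in $L_q$ gives a subsequence equivalent to $\ell_q$ or to $\ell_2$; the latter would make $T$ an isomorphism on the $\ell_2$-copy $[x_{i_k}]$, contradicting $T\in S(L_p,L_q)$. Thus $(Tx_{i_k})\sim\ell_q$, yielding the desired basic-sequence equivalences on both sides.

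It remains to extract a further subsequence $(y_k)$ for which $(|y_k|)$ is equi-measurable and $[y_k]$ is complemented in $L_p$. I would perform this last refinement by a subsequence-selection principle---for instance a Brunel-Sucheston spreading-model argument, Aldous' theorem on subsequences of $L_1$-bounded sequences, or an averaging/blocking technique---forcing the $|y_k|$ to share a common distribution, and exploiting the fact that normalized weakly null $\ell_2$-equivalent sequences in $L_p$ (with $p>2$) contain subsequences which behave like independent identically distributed symmetric random variables. Complementation of $[y_k]$ would then follow from the classical fact that in $L_p$ with $p>2$, subspaces spanned by $\ell_2$-equivalent independent equi-distributed symmetric functions are complemented via the $L_2$-orthogonal projection, whose $L_p$-boundedness is a consequence of Khintchine/Rosenthal-type inequalities. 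I expect this final step---achieving equi-measurability and complementation simultaneously---to be the main technical obstacle, as it demands subsequence principles beyond the Kade\v{c}-Pe\l czy\'nski dichotomy used in the earlier parts of the proof.
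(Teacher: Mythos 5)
Your reduction to the case $(x_{i_k})\sim\ell_2$ and $(Tx_{i_k})\sim\ell_q$ is correct and is essentially the paper's argument (the paper treats the pairs $(r,s)$ with $r\in\{2,p\}$, $s\in\{2,q\}$ simultaneously and quotes \cite{HST} for $p=q$, whereas you argue sequentially and handle $p=q$ directly; both work). The genuine gap is in your final step. Passing to a subsequence of a fixed sequence cannot in general make the moduli $(|y_k|)$ literally equi-measurable (consider, say, Gaussians truncated at height $k$: no subsequence is equi-distributed, though a good decomposition exists), and the tools you gesture at --- Brunel--Sucheston spreading models, Aldous' theorem --- produce asymptotic or distributional limit objects, not exact equi-measurability of a subsequence. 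Your assertion that weakly null $\ell_2$-equivalent sequences in $L_p$, $p>2$, contain subsequences behaving like i.i.d.\ symmetric random variables is stronger than what is true and is not needed.

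What the paper actually uses is the subsequence splitting theorem \cite[Theorem 3.2]{SS}: write $x_{n_k}=u_k+v_k+w_k$ with $(|u_k|)$ equi-measurable, $(v_k)$ pairwise disjoint and $\|w_k\|_p\to0$. One must then show that $T$ kills the disjoint part, i.e.\ $\|Tv_k\|_q\to0$; this is a second, separate use of the hypothesis $p> q>2$ (a seminormalized disjoint sequence in $L_p$ is $\ell_p$-equivalent, and its image could only be $\ell_2$- or $\ell_q$-equivalent, both incompatible with $1/p<1/q<1/2$). Only after this does the perturbation argument transfer the $\ell_q$-equivalence from $(Tx_{n_k})$ to $(Tu_k)$, and the equi-measurable, seminormalized, weakly null $(u_k)$ is then $p$-equi-integrable, hence strongly embedded, hence $\ell_2$-equivalent and spanning a complemented subspace by Kade\v{c}--Pe\l czy\'nski --- no appeal to independence is required for the complementation. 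Your proposal identifies where the difficulty lies but does not supply the decomposition step nor the argument that the disjoint component is annihilated, which is the substantive content of this part of the proof.
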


\begin{proof}
We proceed as in \cite[Lemma 3.2]{HST} where the case $p=q$ was done. Assume $q<p$, since $T\notin K(L_p,L_q)$ then there exists a sequence $(x_k)$ in
$L_p$, such that $\|x_k\|_p=1$,
$x_k\mathop{\to}\limits^{w}0$ and $\|Tx_k\|_q\geq\alpha$
for some $\alpha>0$. By \cite[Corollary 5]{KP}, we may suppose that $(x_k)$ (respectively, $(Tx_k)$) is
equivalent to the unit vector basis of $\ell_r$ with $r\in\{2,p\}$ (resp., $\ell_s$ with $s\in\{2,q\}$).

The cases (i) $r=s=2$, (ii) $r=p,\,s=2$, and (iii) $r=p,\,s=q$ cannot happen. Indeed, in case (i), the restriction of $T$ on the subspace
$[x_k]$ is an isomorphism, which contradicts the assumption that $T\in S(L_p,L_q)$. While, if case (ii) holds, then for every $n\in\mathbb N$ we have
\[
n^{\frac12}\approx  \left\|\sum_{k=1}^nTx_k\right\|_q\leq \|T\|\left\|\sum_{k=1}^n x_k\right\|_p\approx n^{\frac1p},
\]
which is impossible for large $n\in\mathbb N$ as $p>2$. Similarly, in case (iii), we would have
\[
n^{\frac1q}\approx  \left\|\sum_{k=1}^nTx_k\right\|_q\leq \|T\|\left\|\sum_{k=1}^n x_k\right\|_p\approx n^{\frac1p},
\]
which again is impossible for large $n\in\mathbb N$ as $p>q$. Hence, $(x_k)$ is necessarily equivalent to the unit vector basis of $\ell_2$ and
$(Tx_k)$ is equivalent to the unit vector basis of $\ell_q$.

Now, by \cite[Theorem 3.2]{SS} there is a subsequence $(x_{n_k})$ such that $x_{n_k}=u_k+v_k+w_k$, where
\begin{enumerate}
    \item $\|u_k\|_p\leq1$, $(|u_k|)$ are equi-measurable, and $u_k\mathop{\to}\limits^w0$;
    \item $|v_i|\wedge |v_j|=0$ for any $i\ne j$ in $\mathbb{N}$, with $\|v_k\|_p\leq2$, and $v_k\mathop{\to}\limits^w0$;
    \item $\lim\limits_{k\to\infty}\|w_k\|_p=0$.
\end{enumerate}

It holds that $\lim\limits_{k\to\infty}\|Tv_k\|_q=0$. Indeed, otherwise we can select a subsequence $(v_{i_k})$ such that $\inf\limits_k
\|Tv_{i_k}\|_q>0$. By  \cite[Corollary 5]{KP} some subsequence of $(Tv_{i_k})$ is equivalent to the unit vector basis of $\ell_2$
or $\ell_q$. As above, both cases are impossible because $(v_{i_k})$ is equivalent to the unit vector basis of $\ell_p$ and $p>q>2$.

Now, since
$\lim\limits_{k\to\infty}\|w_k\|_p=0$ we have that $\lim\limits_{k\to\infty}\|Tw_k\|_q=0$, and so
\[
 \lim_{k\to\infty}\|Tx_{n_k}-Tu_k\|_q\leq\lim_{k\to\infty}\left(\|Tv_k\|_q+\|Tw_k\|_q\right)=0.
\]
Thus, by a standard perturbation (cf. \cite[Thm. 1.a.9]{LT1}), it follows that $(Tu_{j_k})$ is also equivalent to the unit vector basis of $\ell_q$. Since $u_k\mathop{\to}\limits^w0$, we have that $(u_{j_k})$ must be equivalent to the unit vector basis of $\ell_2$ and spans a complemented subspace of $L_p$. Set $y_k=u_{j_k}$, which satisfies the required properties.
\end{proof}

Following a similar approach as with the $L$-characteristic of an operator, given $T:L_\infty\rightarrow L_1$, let us consider the \emph{$V$-characteristic} set
$$
V(T)=\Big\{\Big(\frac1p,\frac1q\Big)\in(0,1)\times(0,1): T\in V_{p,q}\Big\}.
$$
The following result relates the set $V(T)$ with the boundary of $L(T)$, and extends the preliminary result given in \cite[Theorem 3.7]{HST}) for endomorphisms. 

\begin{theorem}\label{t:boundary}
For every operator $T:L_\infty\rightarrow L_1$ the following inclusion holds:
$$
V(T)\subseteq \partial L(T).
$$
\end{theorem}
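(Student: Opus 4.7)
The strategy is by contradiction: I assume $(1/p,1/q) \in V(T) \cap \mathrm{int}(L(T))$ and derive an incompatibility. The driving observation is that a normalized, disjointly supported sequence in $L_q$ has $L_{q'}$-norms blowing up for every $q'>q$, while the interior of $L(T)$ supplies some $q'>q$ with $T : L_p \to L_{q'}$ bounded, contradicting any such blow-up on the image of a bounded sequence from $L_p$.

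Because $T : L_p \to L_q$ is not compact, I fix a normalized weakly null $(x_k) \subset L_p$ with $\|Tx_k\|_q \geq \lambda > 0$ and apply the Kade\v{c}--Pe\l czy\'nski alternative (Theorem \ref{Kadec-Pelc}) to $(Tx_k)$ in $L_q$. After passing to a subsequence, $(Tx_k)$ is either almost disjoint or strongly embedded. In the almost disjoint case, a small perturbation together with Dor's Theorem \ref{Dor} reduces to $(Tx_k)$ being disjointly supported with $\|Tx_k\|_q \asymp 1$. Since the supports are pairwise disjoint in $[0,1]$, $\mu(\mathrm{supp}\,Tx_k) \to 0$, and H\"older's inequality on each piece gives
$$
\|Tx_k\|_{q'} \;\geq\; \|Tx_k\|_q \cdot \mu(\mathrm{supp}\,Tx_k)^{\,1/q'-1/q} \;\longrightarrow\; \infty
$$
for every $q' > q$. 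The interior hypothesis lets me choose such a $q'$ with $(1/p,1/q') \in L(T)$, so $T:L_p \to L_{q'}$ is bounded and $(Tx_k)$ must be bounded in $L_{q'}$ — contradicting the display.

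In the strongly embedded case, Kade\v{c}--Pe\l czy\'nski identifies $(Tx_k)$ with a sequence equivalent to the $\ell_2$-basis (when $q\geq 2$) or to an $\ell_s$-basis with $s\in[q,2]$ (when $q<2$). Strict singularity forbids $(x_k)$ from being equivalent to the same basis, so a second application of Kade\v{c}--Pe\l czy\'nski to $(x_k)\subset L_p$, combined with Rademacher averaging and the cotype inequalities for $L_p, L_q$ used in the proofs of Theorem \ref{t:dokl} and Proposition \ref{lemma-29-2}, eliminates most resulting combinations. The residual cases are handled by duality: outside the single configuration $p>q>2$, Proposition \ref{p:pleq2} and Theorem \ref{t:dualitySS} provide $T^* \in V_{q',p'}$, and the reflection $(a,b)\mapsto (1-b,1-a)$ takes $L(T)$ to $L(T^*)$, preserving both interior and boundary, so the dual contradiction transports back to $(1/p,1/q)$.

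The main obstacle will be the residual sub-case $p\leq 2\leq q$ with $(Tx_k)\sim \ell_2$, where cotype estimates alone are compatible with strict singularity, non-compactness, and the interior assumption. To close it I plan to invoke the decomposition used in the proof of Proposition \ref{lemma-29-2} (via the Sukochev--Schechtman structure theorem) in order to upgrade $(x_k)$ to a new normalized $(y_k)\subset L_p$ with $(|y_k|)$ equi-measurable and $(Ty_k)$ equivalent to the $\ell_q$-basis and almost disjoint in $L_q$, thereby reducing to the almost disjoint scenario and re-applying the H\"older argument of the second paragraph.
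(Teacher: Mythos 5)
Your first branch (almost disjoint image) is essentially the paper's Lemma \ref{l:qdisjoint} and is sound, provided you run the H\"older estimate through the disjoint sets $A_k$ supplied by Dor's Theorem \ref{Dor} (i.e.\ $\lambda\leq\|Tx_k\chi_{A_k}\|_q\leq\|Tx_k\|_{q'}\mu(A_k)^{1/q-1/q'}$) rather than through $\mathrm{supp}\,Tx_k$ after a perturbation: the perturbation making an almost disjoint sequence disjoint is small only in $L_q$, not in $L_{q'}$. The genuine gap is in the strongly embedded branch, and your proposed fix for the residual case $p\leq2\leq q$ with $(Tx_k)$ equivalent to the $\ell_2$-basis cannot work. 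Take $R:L_p\xrightarrow{P_p}\ell_p\hookrightarrow\ell_2\xrightarrow{J_{rad}}L_q$ with $p<2<q$ (the second operator in the proof of Theorem \ref{t:dokl}): its range lies in the span of the Rademachers, which is strongly embedded in $L_q$, so \emph{no} choice of normalized $(y_k)$ can make $(Ty_k)$ almost disjoint or equivalent to the $\ell_q$-basis; indeed $R\in L(L_p,L_{q'})$ for every $q'<\infty$, so the image-side H\"older argument is structurally unavailable for this operator. Note also that Proposition \ref{lemma-29-2} is stated and proved only for $2<q\leq p$, and its conclusion $(Ty_k)\sim\ell_q$ is obtained precisely by using $p\geq q>2$ to exclude an $\ell_2$-image; for $p\leq2$ that exclusion fails.

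What your plan is missing is the domain-side half of the argument: for operators like $R$ the point $(\frac1p,\frac1q)$ lies on $\partial L(T)$ because $T\notin L(L_s,L_q)$ for $s<p$, not because $T\notin L(L_p,L_{q'})$ for $q'>q$. The paper gets this in two intertwined steps (for $q>2$; the case $p<2$ is then dualized exactly as in your final reduction). First, $T$ must be AM-compact: an order-bounded normalized weakly null sequence in $L_p$ is $\ell_2$-equivalent by \cite[Lemma 1.4]{FHKT} (order-boundedness is essential here --- mere equi-integrability is not enough, since $s$-stable sequences are equi-integrable yet span $\ell_s$), strict singularity rules out an $\ell_2$-image, and Kade\v{c}--Pe\l czy\'nski in $L_q$ with $q>2$ then forces an $\ell_q$-image, landing in your first branch. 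Second, granting AM-compactness, the splitting $x_n=y_n+z_n$ with $(y_n)$ disjoint and $(z_n)$ $p$-equi-integrable gives $\liminf_n\|Tz_n\|_q=0$, so the disjoint part carries the image norm, and H\"older applied to $\|y_n\chi_{A_n}\|_s$ for $s<p$ produces the contradiction. Your proposal contains neither the AM-compactness step nor any mechanism for ruling out boundedness of $T:L_s\to L_q$ with $s<p$, so the residual case remains open. (A lesser point: for $q<2$, identifying a strongly embedded weakly null sequence in $L_q$ with an $\ell_s$-basis requires stability theory, not just Theorem \ref{Kadec-Pelc}; the paper sidesteps $q\leq2$ entirely via Theorem \ref{t:dokl} and duality.)
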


Before the proof we need the following.

\begin{lemma}\label{l:qdisjoint}
Let $1\leq p,q<\infty$ with $q\neq2$. If an operator $T:L_p\rightarrow L_q$ and $(x_n)\subset L_p$ are such that $\sup_n\|x_n\|_p<\infty$ and $(Tx_n)$ is equivalent to the unit vector basis of $\ell_q$, then $T\notin L(L_p,L_r)$ for any $r>q$.
\end{lemma}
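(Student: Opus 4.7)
The plan is to combine Dor's theorem (Theorem \ref{Dor}) — applicable precisely because $q\neq 2$ — with a H\"older interpolation between $L_q$ and $L_r$ on sets of finite measure. Since $(Tx_n)$ is equivalent to the unit vector basis of $\ell_q$, there exist $A,B>0$ with
$$A\Big(\sum |a_i|^q\Big)^{1/q}\le \Big\|\sum a_i Tx_i\Big\|_q \le B\Big(\sum |a_i|^q\Big)^{1/q}$$
for all finitely supported scalar sequences $(a_i)$. Rescaling by $B^{-1}$ when $1\le q<2$ (so that $\|f_i\|_q\le 1$) or by $A^{-1}$ when $2<q<\infty$ (so that $\|f_i\|_q\ge 1$) places the renormalized sequence into the hypotheses of case (1) or case (2) of Theorem \ref{Dor}, respectively, with parameter $\theta=A/B$. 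The conclusion furnishes pairwise disjoint measurable sets $(A_i)\subset[0,1]$ and a constant $\delta=\delta(A,B,q)>0$ such that $\|Tx_i\chi_{A_i}\|_q\ge \delta$ for every $i\in\mathbb{N}$.

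Now suppose towards a contradiction that $T:L_p\to L_r$ is bounded for some $r>q$, and set $N:=\|T\|_{L_p\to L_r}\sup_n\|x_n\|_p<\infty$, so that $\|Tx_n\|_r\le N$ for all $n$. Since each $A_i\subset[0,1]$ has finite measure and $r>q$, H\"older's inequality applied on $A_i$ gives
$$\delta \le \|Tx_i\chi_{A_i}\|_q \le \mu(A_i)^{\frac{1}{q}-\frac{1}{r}}\,\|Tx_i\chi_{A_i}\|_r \le N\,\mu(A_i)^{\frac{1}{q}-\frac{1}{r}},$$
so that $\mu(A_i)\ge (\delta/N)^{qr/(r-q)}>0$ uniformly in $i$. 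But the $A_i$ are pairwise disjoint subsets of $[0,1]$, so $\sum_i\mu(A_i)\le 1$; this is incompatible with an infinite family of sets of measure bounded below, yielding the desired contradiction. The case $r=\infty$ is handled identically with the convention $1/r=0$, giving $\mu(A_i)\ge (\delta/N)^q$.

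The only non-routine step is spotting that Dor's theorem applies uniformly in both regimes $q<2$ and $q>2$ after the appropriate renormalization — the hypothesis $q\neq 2$ being exactly what makes this disjointification available. Once the disjoint sets $A_i$ are in hand, the H\"older estimate is a standard interpolation observation and the measure-theoretic contradiction is immediate.
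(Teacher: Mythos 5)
Your proof is correct and follows essentially the same route as the paper's: apply Dor's theorem to obtain disjoint sets $A_i$ with $\|Tx_i\chi_{A_i}\|_q$ bounded below, then use H\"older's inequality on $A_i$ together with the disjointness (hence $\mu(A_i)\to0$) to contradict boundedness of $T:L_p\to L_r$. Your treatment is slightly more detailed in spelling out the renormalization needed to invoke Dor's theorem in the two regimes $q<2$ and $q>2$, but the argument is the same.
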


\begin{proof}
By Theorem \ref{Dor}, there exist $\lambda>0$ and a sequence of disjoint subsets $A_n\subset[0,1]$ such that $\|(Tx_n) \chi_{A_n}\|_q\geq\lambda$ for every $n\in\mathbb N$. Given $r>q$, let $s>q$ be such that $\frac1r+\frac1s=\frac1q$. By Holder's inequality it follows that
$$
\lambda\leq\|(Tx_n)\chi_{A_n}\|_q\leq\|Tx_n\|_r\|\chi_{A_n}\|_s=\|Tx_n\|_r\mu(A_n)^{\frac1s}.
$$
Since $\mu(A_n)\rightarrow0$ and $s<\infty$ we have that $\|Tx_n\|_r\rightarrow\infty$. Thus, $T\notin L(L_p,L_r)$, as claimed.
\end{proof}

\begin{proof}[Proof of Theorem \ref{t:boundary}]
Let $T\in V_{p,q}$, and suppose $(\frac1p,\frac1q)\notin \partial L(T)$. Let us assume first that $q>2$. We claim that $T$ is AM-compact: To see this, first let  $M\in\mathbb R_+$, and $(f_n)\subset L_p$ be such that $|f_n|\leq M$. We want to prove that some subsequence $(Tf_{n_k})$ converges in norm. If this were not the case, without loss of generality, we can assume that some subsequence $(f_n)$ is normalized weakly null and for some $\lambda>0$ we have $\|Tf_n\|_q>\lambda$ for every $n\in\mathbb N$. By \cite[Lemma 1.4]{FHKT}, there is a subsequence such that $(f_{n_k})$ is equivalent to the unit vector basis of $\ell_2$. Since $T$ is strictly singular, $(Tf_{n_k})$ has no subsequence which is equivalent to the unit vector basis of $\ell_2$. Therefore, by \cite[Corollary 5]{KP}, as $q>2$ it follows that there is a subsequence of $(Tf_{n_k})$ which is equivalent to the unit vector basis of $\ell_q$. Lemma \ref{l:qdisjoint} shows then that $(\frac1p,\frac1q)\in \partial L(T)$. This is a contradiction, hence, we can assume that $T[-M,M]$ is a relatively compact set in $L_q$ for every $M\geq0$.

Now, for arbitrary $f\in L_p$, and any $\varepsilon>0$, taking $M_\varepsilon\in\mathbb R_+$ such that $\|(|f|-M_\varepsilon)_+\|_p\leq\varepsilon$ we have that
$$
[-|f|,|f|]\subset[-M_\varepsilon,M_\varepsilon]+\varepsilon B_{L_p}.
$$
Since $T[-M_\varepsilon,M_\varepsilon]$ is relatively compact, it follows that $T[-|f|,|f|]$ is also relatively compact in $L_q$. Thus, $T$ is AM-compact, as claimed.

Since $T$ is not compact, there is a normalized weakly null sequence $(x_n)$ in $L_p$ such that $c=\inf_n\|Tx_n\|_q>0$. By \cite{KP}, there is a subsequence of $(x_n)$, not relabelled, that can be writen as $x_n=y_n+z_n$ where $(y_n)$ are pairwise disjoint and $(z_n)$ are a $p$-equi-integrable sequence, that is for every $\varepsilon>0$ there is $f_\varepsilon\in L_p$ such that
$$
(z_n)\subset [-f_\varepsilon,f_\varepsilon]+\varepsilon B_{L_p}.
$$

Note that since  $(x_n)$ and $(y_n)$ are weakly null, then so is $(z_n)$. Therefore, since
$$
(T(z_n))\subset T[-f_\varepsilon,f_\varepsilon]+\|T\|\varepsilon B_{L_q},
$$
by the AM-compactness of $T$, it follows that $\liminf_n\|Tz_n\|_q=0$.

Hence, we have found a sequence $(y_n)\subset L_p$ of pairwise disjoint seminormalized elements, such that $\|Ty_n\|_q\geq c>0$ for every $n\in\mathbb N$. Let $A_n\subset [0,1]$ be such that $y_n=y_n\chi_{A_n}$ (which clearly satisfy $\mu(A_n)\rightarrow0$). Suppose now that $T:L_s\rightarrow L_q$ were bounded for some $s<p$. Hence, taking $r>s$ such that $\frac1p+\frac1r=\frac1s$, by Holder's inequality we have
$$
c\leq\|Ty_n\|_q\leq \|T\|\|y_n\|_s=\|T\|\|y_n\chi_{A_n}\|_s\leq\|T\|\|y_n\|_p\|\chi_{A_n}\|_r=\|T\|\|y_n\|_p\mu(A_n)^{\frac1r}.
$$
Since $r<\infty$ and $\|y_n\|_p\leq1$, this is a contradiction. Therefore, $(\frac1p,\frac1q)\in \partial L(T)$ as claimed. This proves the statement when $q>2$.

When $p<2$, we have by Proposition \ref{p:pleq2}, together with Schauder's theorem, that if $T\in V_{p,q}$, then $T^*\in V_{q',p'}$, with $p'>2$. Hence, by the previous part of the proof it follows that $(\frac{1}{q'},\frac{1}{p'})\in\partial L(T^*)$, which is tantamount to $(\frac1p,\frac1q)\in\partial L(T)$.

The only remaining case would be that $q\leq 2\leq p$, but since in this case $V_{p,q}=\emptyset$ by Theorem \ref{t:dokl}, the statement is trivially true.
\end{proof}

\begin{remark}
Note that the inclusion $V(T)\subset \partial L(T)\cap(0,1)\times(0,1)$ can be strict. For instance, the formal inclusion $T:L_\infty\rightarrow L_1$ given by $Tf=f$ is easily seen to satisfy
$V(T)=\emptyset$,
whereas
$$
L(T)=\Big\{\Big(\frac1p,\frac1q\Big):1\leq q\leq p\leq \infty\Big\}.
$$
Analogous examples with $V(T)\neq\emptyset$ can also be constructed using the examples in \cite{R}.
\end{remark}

For regular operators (i.e., those which can be written as a difference of two positive operators) strict singularity is even closer to compactness as the following shows (this result was given in \cite{CG}, but we include a proof here for convenience).

\begin{theorem}\label{t:positive}
Let $T:L_p\rightarrow L_q$ be a regular operator with $1<q\leq p<\infty$. If $T$ is strictly singular, then it is compact.
\end{theorem}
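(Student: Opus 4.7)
The strategy is to reduce to the range $2 < q \leq p < \infty$ and, in that case, combine the structural sequence supplied by Proposition \ref{lemma-29-2} with the equi-integrability that regularity transmits through the positive modulus $|T|$, so as to derive a contradiction with Dor's theorem.

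If $p \geq 2 \geq q$, then Theorem \ref{t:dokl} gives $V_{p,q} = \emptyset$ and the conclusion is immediate. If $1 < q \leq p < 2$, the adjoint $T^{*}\colon L_{q'} \to L_{p'}$ is again regular (duals of regular operators between Banach lattices are regular), strictly singular by Proposition \ref{p:pleq2}, and satisfies $2 < p' \leq q' < \infty$; since compactness is self-dual (Schauder), it suffices to treat $2 < q \leq p < \infty$.

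Suppose then $T \in V_{p,q}$ with $2 < q \leq p < \infty$, and apply Proposition \ref{lemma-29-2} to obtain a normalized sequence $(y_k)\subset L_p$, equivalent to the unit vector basis of $\ell_2$, with $(|y_k|)$ equi-measurable and $(Ty_k)$ equivalent to the unit vector basis of $\ell_q$. Since $q>2$, Dor's theorem (Theorem \ref{Dor}(ii)) produces pairwise disjoint measurable sets $A_k\subset[0,1]$ and $\delta>0$ with
\[
\|Ty_k\,\chi_{A_k}\|_q \;\geq\; \delta \qquad \text{for every } k.
\]
On the other hand, equi-measurability together with $\|y_k\|_p=1$ forces $\sup_k \|y_k\,\chi_{\{|y_k|>M\}}\|_p \to 0$ as $M\to\infty$; in other words, $(|y_k|)$ is equi-integrable in $L_p$. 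Regularity now enters: writing $y_k = y_k\chi_{\{|y_k|\leq M\}}+y_k\chi_{\{|y_k|>M\}}$, the positivity of $|T|$ yields
\[
\bigl|T\bigl(y_k\chi_{\{|y_k|\leq M\}}\bigr)\bigr| \;\leq\; M\,|T|(\chi_{[0,1]})\in L_q,
\]
while the tail contributes at most $\|T\|\cdot\sup_k\|y_k\chi_{\{|y_k|>M\}}\|_p$, which is uniformly small for large $M$. Consequently $(Ty_k)$ is itself equi-integrable in $L_q$: for every $\eta>0$ there exists $\gamma>0$ such that $\mu(A)<\gamma$ implies $\sup_k\|Ty_k\chi_A\|_q<\eta$. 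Since the sets $A_k$ are pairwise disjoint in $[0,1]$, $\mu(A_k)\to 0$, hence $\|Ty_k\chi_{A_k}\|_q\to 0$, contradicting the lower bound above.

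The delicate point is to see that regularity need not produce a single pointwise $L_q$-majorant for $(Ty_k)$ — the functions $|y_k|$ share a distribution but need not admit a common dominant in $L_p$ — yet it does upgrade the equi-integrability of $(|y_k|)$ (a consequence of equi-measurability in Proposition \ref{lemma-29-2}) to equi-integrability of the images $(Ty_k)$ in $L_q$, and this uniform absolute continuity of the norm is precisely what Dor's disjointness conclusion is incompatible with.
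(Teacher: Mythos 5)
Your proof is correct, but it follows a genuinely different route from the one in the paper. The paper first shows that every regular operator $L_p\to L_q$ with $q<p$ is $M$-weakly compact (via the modulus, Kadec--Pe\l czy\'nski and a $n^{1/q}$ versus $n^{1/p}$ estimate), handles $p=q$ by citing \cite[Corollary 3.5]{HST}, and then in each remaining index range manufactures AM-compactness by pushing the trivial case $q\leq2\leq p$ of Theorem \ref{t:dokl} along interpolation segments with Krasnoselskii's Theorem \ref{Krasnoselskii}, finally invoking \cite[Proposition 3.7.4]{MN} to combine $M$-weak compactness with AM-compactness (and a duality pass for $2<q<p$). You instead reduce everything to $2<q\leq p$ by Theorem \ref{t:dokl} and duality (Proposition \ref{p:pleq2} plus Schauder and the regularity of $T^*$), and then run the Dor-versus-equi-integrability collision from the proof of Theorem \ref{t:extrapol}(a), with one substitution: where that proof bounds $\|T(y_k\chi_{C_k})\|$ using boundedness at a second interpolation endpoint, you bound the truncated images by the single fixed majorant $M\,|T|(\chi_{[0,1]})\in L_q$, which is exactly where regularity enters. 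Your argument is more self-contained -- it needs neither \cite[Corollary 3.5]{HST} nor \cite[Proposition 3.7.4]{MN} nor Krasnoselskii interpolation, only Proposition \ref{lemma-29-2} and Theorem \ref{Dor}, and it absorbs the case $p=q>2$ directly -- while the paper's route isolates the reusable fact that regular operators $L_p\to L_q$, $q<p$, are $M$-weakly compact, which has independent interest. All the individual steps check out: the adjoint of a regular operator is regular, the equi-measurability of $(|y_k|)$ from Proposition \ref{lemma-29-2} does give the uniform smallness of the tails $\|y_k\chi_{\{|y_k|>M\}}\|_p$, and disjointness of the sets $A_k$ in $[0,1]$ forces $\mu(A_k)\to0$, so the contradiction with Dor's lower bound is genuine.
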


\begin{proof}
The case when $p=q$ is given in \cite[Corollary 3.5]{HST}. Hence, let us suppose $q<p$. We claim that in this case every regular operator $T:L_p\rightarrow L_q$ is M-weakly compact. Indeed, let $(x_n)$ be a norm bounded disjoint sequence in $L_p$. As $T$ is regular, its modulus $|T|$ defines a bounded operator. Since $(|x_n|)$ is weakly null, it follows that $(|T|(|x_n|))$ is also weakly null. Moreover, as $|T|(|x_n|)\geq0$ for every $n\in\mathbb N$, we have that $\||T|(|x_n|)\|_1\rightarrow0$. If $\||T|(|x_n|)\|_q\rightarrow 0$, then we are done, so let us assume the contrary. By Kadec-Pelczynski Theorem \ref{Kadec-Pelc}, we can assume that the sequence $(|T|(|x_n|))$ is almost disjoint in $L_q$. Hence, for every $n\in\mathbb N$ we have
$$
n^{\frac1q}\approx\Big\|\sum_{k=1}^n |T|(|x_k|)\Big\|_q\leq\|T\|\Big\|\sum_{k=1}^n |x_k|\Big\|_p\approx n^{\frac1p},
$$
which is a contradiction with $q<p$.

Note that if $q\leq 2\leq p$, then by Theorem \ref{t:dokl} we know that strictly singular operators are compact. If $q<p<2$, then we have that $T:L_r\rightarrow L_q$ is strictly singular for every $r\geq p$, so by Theorem \ref{t:dokl} we have that $T:L_r\rightarrow L_q$ is compact for $r\geq2$, so Krasnoselskii Theorem \ref{Krasnoselskii} implies that $T:L_r\rightarrow L_q$ is actually compact for every $r> p$. From this, it is easy to see that $T:L_p\rightarrow L_q$ is AM-compact. Hence, by \cite[Proposition 3.7.4]{MN} we get that $T:L_p\rightarrow L_q$ is compact.

Finally, it remains the case when $2<q<p$. Now, $T:L_p\rightarrow L_s$ is strictly singular for $s\leq q$, so again by Theorem \ref{t:dokl} we have that $T:L_p\rightarrow L_s$ is compact for $s\leq 2$, and Krasnoselskii Theorem \ref{Krasnoselskii} yields that $T:L_p\rightarrow L_s$ is compact for $s<q$. By Schauder's theorem we have that $T^*:L_{s'}\rightarrow L_{p'}$ is compact for $s<q$, where $\frac1s+\frac1{s'}=1=\frac1p+\frac1{p'}$, and as before we get that $T^*:L_{q'}\rightarrow L_{p'}$ is AM-compact. Hence, since $T^*$ is $M$-weakly compact, by \cite[Proposition 3.7.4]{MN} we get that $T^*:L_{q'}\rightarrow L_{p'}$ is compact. Therefore, $T:L_p\rightarrow L_q$ is compact.
\end{proof}

We will later see in Theorem \ref{t:paralel} that the conditions in Theorem \ref{t:positive} cannot be relaxed, i.e., there exist regular operators $T\in V_{p,q}$ for $p<q$. Similarly for $p=q=1$.

\bigskip

\section{Extrapolation}\label{s:extrapolation}

The following is an extrapolation property of strict singularity and extends a previous result given for the case of endomorphisms in \cite[Theorem 3.3]{HST}. Throughout, let  $1\leq p_0,p_1,q_0,q_1 \leq \infty$, and for each  $\theta\in(0,1)$, let $p_\theta$ and $q_\theta$ be given by
$$
\frac{1}{p_\theta}=\frac{\theta}{p_0}+\frac{1-\theta}{p_1}\,\,\textrm{ and }\,\,\frac{1}{q_\theta}=\frac{\theta}{q_0}+\frac{1-\theta}{q_1}.
$$

\begin{theorem}\label{t:extrapol}
Let $1<p_i,q_i<\infty$ for $i=0,1$ with  $q_0\neq q_1$ , $p_0\neq p_1$. Suppose either
\begin{itemize}
\item $\min\{\frac{q_0}{p_0},\frac{q_1}{p_1}\}\leq 1$, or
\item $\min\{\frac{q_0}{p_0},\frac{q_1}{p_1}\}> 1$ and $\frac{q_1-q_0}{p_1-p_0}<0$.
\end{itemize}
If $T$ is a bounded operator from $L_{p_i}$ to $L_{q_i}$ for $i=0,1$, and for some $0 < \theta <1 $, $T\in S(L_{p_{\theta}},L_{q_{\theta}})$, then $T\in K(L_{p_\tau},L_{q_\tau})$ for every $\tau\in(0,1)$.
\end{theorem}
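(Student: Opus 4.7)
By Krasnoselskii's theorem for the compact ideal (Theorem \ref{Krasnoselskii}) it suffices to establish $T\in K(L_{p_\tau},L_{q_\tau})$ for a single $\tau\in(0,1)$, as interpolating this compactness with the boundedness at either endpoint of the interpolation segment $I$ propagates compactness to every $\tau\in(0,1)$. Suppose for contradiction $T\in S(L_{p_\theta},L_{q_\theta})\setminus K(L_{p_\theta},L_{q_\theta})$, so $(1/p_\theta,1/q_\theta)\in V(T)$. Theorem \ref{t:boundary} places this point on $\partial L(T)$, and Riesz-Thorin together with the convexity of $L(T)$ forces the whole segment $I$ onto $\partial L(T)$.

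\textbf{Strategy.} The plan is to show $T:L_{p_\theta}\to L_{q_\theta}$ is both $M$-weakly compact and $AM$-compact, yielding compactness via the Kade\v{c}-Pe\l czy\'nski decomposition (Theorem \ref{Kadec-Pelc}): for any weakly null normalized $(x_n)\subset L_{p_\theta}$ one has $x_n=y_n+z_n$ (after a subsequence) with $(y_n)$ disjoint and $(z_n)$ $p_\theta$-equi-integrable, so $M$-weak compactness gives $\|Ty_n\|_{q_\theta}\to 0$ and $AM$-compactness applied to a dominating order interval for the equi-integrable part gives $\|Tz_n\|_{q_\theta}\to 0$ (cf.\ \cite[Proposition 3.7.4]{MN}), yielding $T\in K(L_{p_\theta},L_{q_\theta})$, the desired contradiction.

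\textbf{$M$-weak compactness.} If $(1/s,1/q_\theta)\in L(T)$ for some $s<p_\theta$, then for any bounded disjoint $(y_n)\subset L_{p_\theta}$ with $y_n=y_n\chi_{A_n}$ and $\mu(A_n)\to 0$, H\"older's inequality yields
\[
\|Ty_n\|_{q_\theta}\leq \|T\|_{s\to q_\theta}\,\mu(A_n)^{1/s-1/p_\theta}\,\|y_n\|_{p_\theta}\to 0.
\]
In the second case of the hypothesis (both $q_i>p_i$ and $(q_1-q_0)/(p_1-p_0)<0$), after reordering so that $p_0<p_1$ one has $q_0>q_\theta>q_1$, whence the probability-space inclusion $L_{q_0}\hookrightarrow L_{q_\theta}$ together with $T:L_{p_0}\to L_{q_0}$ bounded gives $T:L_{p_0}\to L_{q_\theta}$ bounded, with $p_0<p_\theta$. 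In the first case ($\min\{q_0/p_0,q_1/p_1\}\leq 1$), a case analysis on the sign of the slope and the position of the segment $I$ relative to the diagonal---invoking, where necessary, the strict singularity of $T$ via Proposition \ref{lemma-29-2}, Lemma \ref{l:qdisjoint} and Dor's Theorem \ref{Dor}---again produces such an $s$.

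\textbf{$AM$-compactness and the main obstacle.} Showing $T[-f,f]$ relatively compact in $L_{q_\theta}$ for every $f\in L_{p_\theta}^+$ is the delicate step: unlike in Theorem \ref{t:boundary} where the operator is at an interior point of $L(T)$, here $(1/p_\theta,1/q_\theta)\in\partial L(T)$, so that proof's simpler contradiction is unavailable. We adapt its scheme: assume $T[-f,f]$ not relatively compact, extract an $\ell_2$-equivalent normalized weakly null subsequence, use strict singularity to exclude an $\ell_2$-equivalent image in $L_{q_\theta}$, invoke the Kade\v{c}-Pe\l czy\'nski dichotomy to obtain an $\ell_{q_\theta}$-equivalent subsequence of images, and apply Lemma \ref{l:qdisjoint} to conclude $(1/p_\theta,1/r)\notin L(T)$ for every $r>q_\theta$. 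The main obstacle is to turn this restriction into a contradiction with the hypotheses: one must show that the geometric configuration forced on $L(T)$ by the slope hypothesis in the second case, or by the diagonal hypothesis in the first case, is incompatible with $(1/p_\theta,1/r)\notin L(T)$ for all $r>q_\theta$, exploiting the already established boundedness points such as $(1/p_0,1/q_\theta)$ and the monotonicity of $L(T)$. Once this case analysis is completed and both $M$-weak and $AM$-compactness are in hand, the combination yields $T\in K(L_{p_\theta},L_{q_\theta})$, producing the required contradiction with $(1/p_\theta,1/q_\theta)\in V(T)$.
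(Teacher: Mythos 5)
Your reduction to a single $\tau$ via Theorem \ref{Krasnoselskii} is correct, and your observation that the whole interpolation segment lands on $\partial L(T)$ is fine, but the core of the argument --- establishing AM-compactness of $T:L_{p_\theta}\to L_{q_\theta}$ --- is exactly the step you flag as ``the main obstacle,'' and the route you propose for it cannot be closed in general. Your plan is to derive from Lemma \ref{l:qdisjoint} that $(\frac{1}{p_\theta},\frac1r)\notin L(T)$ for all $r>q_\theta$ and then contradict this using the known points of $L(T)$ and its monotonicity. But consider the configuration $q_0<q_\theta<q_1$, $p_0<p_\theta<p_1$ with both $\frac{q_i}{p_i}\le 1$ (e.g.\ $(p_0,q_0)=(4,3)$, $(p_1,q_1)=(20,10)$, $\theta=\frac12$, so $2<q_\theta<p_\theta$): every point of the segment with second coordinate below $\frac1{q_\theta}$ has first coordinate below $\frac1{p_\theta}$, so the monotone hull of the segment contains no point $(\frac1{p_\theta},\frac1r)$ with $r>q_\theta$, and no contradiction is available from boundedness plus monotonicity. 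The same configuration defeats your $M$-weak compactness step as stated, since no $s<p_\theta$ with $(\frac1s,\frac1{q_\theta})\in L(T)$ is guaranteed either (that part is salvageable by a direct $n^{1/q_\theta}\lesssim n^{1/p_\theta}$ estimate on disjoint sequences when $2<q_\theta<p_\theta$, but ``invoking strict singularity'' cannot manufacture new boundedness points of $L(T)$). You also leave the cases $q_\theta\le 2$ untreated: the Kade\v{c}--Pe\l czy\'nski step producing an $\ell_{q_\theta}$-equivalent image requires $q_\theta>2$, and the paper disposes of $p_\theta<2$ by duality (Theorem \ref{t:dualitySS}) and of $q_\theta\le 2\le p_\theta$ by Theorem \ref{t:dokl}.

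The paper's proof of the basic case does not go through AM-compactness at all; the missing idea is Proposition \ref{lemma-29-2} combined with a truncation exploiting equi-measurability. One extracts a normalized $\ell_2$-sequence $(y_k)$ with $(|y_k|)$ equi-measurable and $(Ty_k)$ equivalent to the $\ell_{q_\theta}$-basis, applies Dor's theorem to get disjoint sets $A_k$ with $\|(Ty_k)\chi_{A_k}\|_{q_\theta}\ge c$, and then splits $y_k=y_k\chi_{C_k}+y_k\chi_{[0,1]\setminus C_k}$ where $\mu([0,1]\setminus C_k)<\varepsilon$ and $\|y_k\chi_{C_k}\|_\infty\le y_1^*(\varepsilon)$ uniformly in $k$. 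The point is that the truncated parts lie in a fixed ball of $L_\infty$, hence of $L_{p_1}$, \emph{regardless of whether $p_1$ is larger or smaller than $p_\theta$}; applying the $(p_1,q_1)$-bound to them and H\"older's inequality $\|\cdot\chi_{A_k}\|_{q_\theta}\le\|\cdot\|_{q_1}\mu(A_k)^{1/q_\theta-1/q_1}$ kills their contribution as $\mu(A_k)\to0$, while uniform $p_\theta$-integrability (again from equi-measurability) makes the remainders small in $L_{p_\theta}$. This is precisely the mechanism that resolves the configuration where your monotonicity argument fails, and without it (or a substitute) the proposal does not constitute a proof.
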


\begin{proof}
We will split the proof in three cases:
\begin{enumerate}
\item[(a)] When $\max\{\frac{q_0}{p_0},\frac{q_1}{p_1}\}\leq 1$.
\item[(b)] When $\min\{\frac{q_0}{p_0},\frac{q_1}{p_1}\}\leq 1<\max\{\frac{q_0}{p_0},\frac{q_1}{p_1}\}$.
\item[(c)] When $\min\{\frac{q_0}{p_0},\frac{q_1}{p_1}\}> 1$ and $\frac{q_1-q_0}{p_1-p_0}<0$.
\end{enumerate}
\medskip

(a) Supppose that $\max\{\frac{q_0}{p_0},\frac{q_1}{p_1}\}\leq 1$. Without loss of generality, let us assume $q_0<q_\theta<q_1$. Let us suppose that $T\notin K(L_{p_\tau},L_{q_\tau})$ for some $\tau\in(0,1)$. By Krasnoselskii's Theorem \ref{Krasnoselskii}, we must then have that for the given $\theta\in(0,1)$, $T\in V_{p_{\theta},q_{\theta}}$. Suppose first that $2< q_{\theta}\leq p_{\theta}$, and let $(y_k)\subset L_{p_{\theta}}$ be the sequence obtained by Proposition \ref{lemma-29-2}. Since the sequence $(Ty_k)$ is equivalent to the unit vector basis of $\ell_{q_{\theta}}$, by Dor's Theorem \ref{Dor}, there exist $c>0$ and disjoint sets $(A_k)\subset [0,1]$, such that for every $k\in\mathbb N$
\begin{equation}\label{eq:dor}
\|(Ty_k)\chi_{A_k}\|_{q_{\theta}}\geq c.
\end{equation}

Since $(y_k)$ are equi-measurable, by Hardy-Littlewood inequality (cf. \cite[\S 2, Lemma 2.1]{BS}) we have for every $k\in \mathbb N$
$$
\int_A|y_{k}|^{p_\theta} \leq \int _{0}^{\mu(A) } (y_{1}^*)^{p_\theta}.
$$
Where $y_1^*$ denotes the decreasing rearrangement of $y_1$, and hence of every $y_k$. Therefore, there is $\varepsilon>0$ such that whenever $\mu(A)<\varepsilon$, it follows that
\begin{equation}\label{eq:equi-integrable}
\|y_k\chi_A\|_{p_\theta}<\frac{c}{2\|T\|_{p_\theta,q_\theta}}
\end{equation}
for every $k\in\mathbb N$ (in other words, $(y_k)$ are uniformly $p_\theta$-integrable).

The equi-measurability of $(y_k)$ also implies the existence of measurable subsets $C_k\subset[0,1]$ with $\mu(C_k)\geq1-\varepsilon$, such that $y_k\chi_{C_k}\in L_\infty$ with $\|y_k\chi_{C_k}\|_\infty\leq y_1^*(\varepsilon)$ for every $k\in\mathbb N$. Let
$$
r_\theta=\frac{q_0q_1}{\theta(q_1-q_0)}.
$$
It follows that $\frac{1}{q_\theta}=\frac{1}{q_1}+\frac{1}{r_\theta}$. Now, using H\"{o}lder's inequality and the fact that
$$
\|y_k\chi_{C_k}\|_{p_1}\leq\|y_k\chi_{C_k}\|_\infty\leq y_1^*(\varepsilon)
$$
we have
\begin{align}
    \|(T(y_k\chi_{C_k}))\chi_{A_k}\|_{q_\theta} &\leq\|T(y_k\chi_{C_k})\|_{q_1}\|\chi_{A_k}\|_{r_\theta}\label{eq:mu(A_k)}\\
\nonumber    &\leq\|T\|_{p_1,q_1}\,y^*_1(\varepsilon)\,\mu(A_k)^{\theta\left(\frac1{q_0}-\frac1{q_1}\right)}.
\end{align}
On the other hand, since $\mu([0,1]\backslash C_k)<\varepsilon$, by \eqref{eq:equi-integrable} we have
\begin{equation}\label{eq:c/2}
    \|T(y_k\chi_{[0,1]\backslash C_k})\|_{q_\theta}\leq \|T\|_{p_\theta,q_\theta}\|y_k\chi_{[0,1]\backslash C_k}\|_{p_\theta}<\frac{c}{2}.
\end{equation}

Therefore, we have
\begin{align*}
\|(Ty_k)\chi_{A_k}\|_{q_\theta}&\leq\|(T(y_k\chi_{C_k}))\chi_{A_k}\|_{q_\theta}+\|T(y_k\chi_{[0,1]\backslash C_k})\|_{q_\theta}\\
&\leq\|T\|_{p_1,q_1}\,y^*_1(\varepsilon)\,\mu(A_k)^{\theta\left(\frac1{q_0}-\frac1{q_1}\right)}+\frac{c}{2}.
\end{align*}
and since $\mu(A_k)\rightarrow 0$ this is a contradiction with \eqref{eq:dor} for large $k\in\mathbb N$.

This finishes the proof in the case that $2< q_{\theta}\leq p_{\theta}$. The case when $q_{\theta}\leq p_{\theta}<2$ reduces to the previous one by a standard duality argument based on Theorem \ref{t:dualitySS}. Finally, Theorem \ref{t:dokl} yields that $V_{p_\theta,q_\theta}=\emptyset$ when $q_{\theta}\leq 2\leq p_{\theta}$ and the claim follows.
\medskip

(b) Supppose that $\min\{\frac{q_0}{p_0},\frac{q_1}{p_1}\}\leq 1<\max\{\frac{q_0}{p_0},\frac{q_1}{p_1}\}$. Without loss of generality, we can assume that $q_0\leq p_0$, $p_1<q_1$ and $p_\theta\leq q_\theta$. Let
$$
m=\frac{\frac1{p_0}-\frac1{p_1}}{\frac1{q_0}-\frac1{q_1}}.
$$
Assume first that $q_0<p_0$. We distinguish two cases:

Suppose first that $m< 0$. Since $T\in S(L_{p_{\theta}},L_{q_{\theta}})$, we have that $T\in S(L_p,L_q)$ whenever $p\geq p_\theta$ and $q\leq q_\theta$. Since $m<0$, we can take $\theta'\in(0,1)$ such that $p_{\theta'}>p_\theta$, $q_{\theta'}<q_\theta$ and $q_{\theta'}<p_{\theta'}$. Therefore, $T\in S(L_{p_{\theta'}},L_{q_{\theta'}})$ and by part (a) we must have $T\in K(L_{p_{\theta'}},L_{q_{\theta'}})$. By Krasnoselskii Theorem \ref{Krasnoselskii}, the conclussion follows.

Now, suppose that $m>0$. Pick as before $\theta'\in(0,1)$ such that $p_{\theta'}>p_\theta$, $q_{\theta'}<q_\theta$ and $q_{\theta'}<p_{\theta'}$. Since $T\in S(L_{p_{\theta}},L_{q_{\theta}})$, part (a) together with Theorem \ref{Krasnoselskii} imply that
\begin{equation}\label{q<q_theta'}
T\in K(L_{p_{\theta'}},L_{q})\,\,\,\textrm{whenever}\,q<q_{\theta'},
\end{equation}
and
\begin{equation}\label{p>p_theta'}
T\in K(L_{p},L_{q_{\theta'}})\,\,\,\textrm{whenever}\,p>p_{\theta'}.
\end{equation}

From \eqref{p>p_theta'}, it easily follows that $T:L_{p_{\theta'}}\rightarrow L_{q_{\theta'}}$ is AM-compact. We will see that $T:L_{p_{\theta'}}\rightarrow L_{q_{\theta'}}$ is also $M$-weakly compact. Indeed, otherwise there exist a disjoint normalized sequence $(x_n)\subset L_{p_{\theta'}}$ such that $\|Tx_n\|_{q_{\theta'}}\geq\alpha>0$ for every $n\in\mathbb N$. By Kadec-Pelczynski dichotomy, either $\|Tx_n\|_{q_{\theta'}}\approx \|Tx_n\|_1$, or $(Tx_n)$ has an almost disjoint subsequence. The former case is impossible because of \eqref{q<q_theta'}. Hence, passing to a subsequence we can assume that $(Tx_n)$ is equivalent to the unit basis of $\ell_{q_{\theta'}}$. In particular, for every $N\in\mathbb N$ we have
$$
N^{\frac{1}{q_{\theta'}}}\approx \Big\|\sum_{n=1}^N T x_n\Big\|_{q_{\theta'}}\leq\|T\|\Big\|\sum_{n=1}^N x_n\Big\|_{p	_{\theta'}}\approx N^{\frac1{p_{\theta'}}},
$$
which is a contradiction with the fact that $q_{\theta'}<p_{\theta'}$. Therefore, it follows that $T:L_{p_{\theta'}}\rightarrow L_{q_{\theta'}}$ is M-weakly compact, and by \cite[Proposition 3.7.4]{MN} it follows that $T\in K(L_{p_{\theta'}},L_{q_{\theta'}})$. Again, by Krasnoselskii Theorem \ref{Krasnoselskii}, the conclusion follows.

It remains to consider the case when $q_0=p_0$  with $p_{1}< q_{1}$. Since $T\in S(L_{p_{\theta}},L_{q_{\theta}})$ for some $\theta\in(0,1)$, we have that $T\in S(L_p,L_q)$ for $p\geq p_\theta$ and $q\leq q_\theta$. By part (a), it follows that $T\in K(L_p,L_q)$ for $p\geq p_\theta$ and $q\leq q_\theta$. If $m<0$ this already implies that $T\in K(L_{p_0},L_{q_0})$ and by Krasnoselskii's Theorem \ref{Krasnoselskii} the conclusion follows. Finally, if $m>0$, then arguing as above with statements similar to \eqref{q<q_theta'} and \eqref{p>p_theta'}, it follows that $T\in K(L_{p_{\theta}},L_{q_{\theta}})$ for the given $\theta\in(0,1)$, and again by Krasnoselskii Theorem \ref{Krasnoselskii} the proof is finished.
\medskip

(c) Suppose $\min\{\frac{q_0}{p_0},\frac{q_1}{p_1}\}> 1$ and $\frac{q_1-q_0}{p_1-p_0}<0$. Without loss of generality assume that $p_1<p_0$. Note that we clearly have $T\in L(L_{p_1},L_{p_1})$ and $T\in L(L_{q_1},L_{q_1})$. Since $T\in S(L_{p_\theta},L_{q_\theta})$ we have $p_1<q_\theta<q_1$ with $T\in S(L_{q_\theta},L_{q_\theta})$. Thus, by part (a) it follows that $T\in K(L_r,L_r)$ for every $p_1<r<q_1$. Now, by Krasnoselskii Theorem \ref{Krasnoselskii} we get that $T\in K(L_{p_\tau},L_{q_\tau})$ for every $\tau\in(0,1)$.
\end{proof}

\begin{remark}
The above result does not hold when $p_0=p_1$ or $q_0=q_1$. Indeed, consider the operators
$$
\xymatrix{L_p\ar_{P_p}[d]\ar^T[rr]&&L_q&&L_p\ar_{P_{rad}}[d]\ar^R[rr]&&L_q\\
\ell_p\ar@{^{(}->}[rr]&&\ell_2 \ar_{J_{rad}}[u]&&\ell_2\ar@{^{(}->}[rr]&&\ell_q \ar_{J_q}[u] }
$$
with $p<2$ in the first case and $2<q$ in the second, $P_p$, $P_{rad}$ are projections onto the span of disjointly supported functions in $L_p$ and respectively the span of the Rademacher functions, and $J_{rad}$, $J_q$ the embeddings via the Rademacher functions and a sequence of normalized disjointly supported functions in $L_q$.

Now, note that for $1 \leq p_0\leq p_\theta\leq p_1\leq 2$ and $1<q<\infty$, we have that $T\in V_{p_\theta,q}$. Similarly, for $2\leq q_0 \leq q_\theta \leq q_1<\infty$ and $1<p<\infty$, the above operator satisfies $R\in V_{p,q_\theta}$.
\end{remark}

\begin{remark}
Theorem \ref{t:extrapol} does not hold when $p_0=p_1\in\{1,\infty\}$. Indeed, let $T:L_\infty\rightarrow L_q$ be the formal inclusion operator. We have that $T\in V_{\infty,q}$ for every $q\in[1,\infty)$. Analogously, given a sequence of paiwise disjoint sets $(A_n)_{n=1}^\infty\subset[0,1]$, each of them having positive measure, let $T:L_1\rightarrow L_q$ be given by
$$
Tf=\sum_{n=1}^\infty \Big(\int_{A_n}f d\mu\Big)\,r_n,
$$
where $r_n$ denotes the $n$-th Rademacher function. It follows that $T\in V_{1,q}$ for every $q\in(1,\infty)$.
\end{remark}

\begin{remark}
Notice that an alternative proof of Theorem \ref{t:boundary} can be given  using the above extrapolation result. Indeed, assume that there exists $(\frac1p,\frac1q) \in V(T)\backslash\partial L(T)$. Clearly, $(\frac1p,\frac1q)$ must belong to the interior of $L(T)$. Now we can deduce from the extrapolation Theorem \ref{t:extrapol}, by taking a suitable interpolation line segment in each case, that $T \in K(L_{p},L_{q})$. This is a contradiction.
\end{remark}

\begin{corollary}
Let $T:L_\infty\rightarrow L_1$ and assume $V(T)$ contains a line segment $I$ with $(\frac1{p_0},\frac1{q_0})\in I$ for some $1<q_0\leq p_0$. It follows that either
\begin{enumerate}
\item $p_0<2$ and $I$ is a vertical segment, or
\item $q_0>2$ and $I$ is a horizontal segment.
\end{enumerate}
\end{corollary}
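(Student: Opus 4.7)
The plan is to establish three facts in sequence: (a) $I$ must be either vertical or horizontal; (b) if $I$ is vertical, then $p_0<2$; and (c) if $I$ is horizontal, then $q_0>2$. Combining these yields the disjunction in the statement.

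For (a), I proceed by contradiction: if $I$ were neither vertical nor horizontal, I can select a second point $(\frac{1}{p_1},\frac{1}{q_1})\in I$ with $p_1\neq p_0$ and $q_1\neq q_0$. Both points lie in $V(T)\subset L(T)$, so $T:L_{p_i}\to L_{q_i}$ is bounded and strictly singular for $i=0,1$; moreover, $V(T)\subset(0,1)\times(0,1)$ gives $p_i,q_i\in(1,\infty)$. The hypothesis $q_0\leq p_0$ gives $q_0/p_0\leq 1$, so condition (i) of Theorem \ref{t:extrapol} is satisfied. Interior points of the segment from $(\frac{1}{p_0},\frac{1}{q_0})$ to $(\frac{1}{p_1},\frac{1}{q_1})$ lie on $I$ and are therefore strictly singular; hence Theorem \ref{t:extrapol} forces $T\in K(L_{p_\tau},L_{q_\tau})$ for every $\tau\in(0,1)$, contradicting their membership in $V(T)$.

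For (b), suppose $I$ is vertical at $\alpha=\frac{1}{p_0}$ and, for contradiction, $p_0\geq 2$. Since $(\frac{1}{p_0},\frac{1}{q})\in V(T)$ for every $q$ in the non-degenerate range $J$ of $q$-values on $I$, Theorem \ref{t:dokl} forces $q>2$ for each such $q$; in particular $2<q_0\leq p_0$. Proposition \ref{lemma-29-2} then yields a sequence in $L_{p_0}$ whose $T$-image is equivalent to the unit basis of $\ell_{q_0}$, so by Lemma \ref{l:qdisjoint} (applicable since $q_0\neq 2$), $T\notin L(L_{p_0},L_r)$ for any $r>q_0$. Consequently every other point $(\frac{1}{p_0},\frac{1}{q_1})$ of $I\subset L(T)$ satisfies $q_1<q_0$, so $2<q_1<q_0\leq p_0$; applying the same two results at $(p_0,q_1)$ gives $T\notin L(L_{p_0},L_r)$ for $r>q_1$, and taking $r=q_0$ contradicts $(\frac{1}{p_0},\frac{1}{q_0})\in L(T)$.

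For (c), suppose $I$ is horizontal at $\beta=\frac{1}{q_0}$ and assume $q_0\leq 2$ for contradiction. Theorem \ref{t:dokl} applied at each $(\frac{1}{p},\frac{1}{q_0})\in V(T)$ forces $p<2$, so Proposition \ref{p:pleq2} together with Schauder's theorem gives $T^*\in V_{q_0',p'}$ for every such $p$. The points $(\frac{1}{q_0'},\frac{1}{p'})$ form a non-degenerate vertical segment in the $V$-set of $T^*$, whose base point $(\frac{1}{q_0'},\frac{1}{p_0'})$ satisfies $1<p_0'\leq q_0'$ because $q_0\leq p_0<\infty$. Since the argument for (b) uses only Proposition \ref{lemma-29-2} and Lemma \ref{l:qdisjoint} and is unaffected by whether the ambient operator acts from $L_\infty$ to $L_1$, I can apply (b) to $T^*$ and conclude $q_0'<2$, contradicting $q_0'\geq 2$. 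The main technical subtlety arises in step (b): one must ensure that Proposition \ref{lemma-29-2} remains applicable at the secondary point $(p_0,q_1)$, i.e., that $q_1\leq p_0$; this is automatic because Lemma \ref{l:qdisjoint} at the base point forces $q_1<q_0\leq p_0$.
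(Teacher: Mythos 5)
Your proof is correct, and its step (a) coincides with the paper's own argument: an oblique segment would contain a second point with $p_1\neq p_0$, $q_1\neq q_0$, and since $q_0/p_0\leq 1$ the first alternative of Theorem \ref{t:extrapol} applies, forcing compactness at the interior points of the interpolation segment and contradicting $I\subseteq V(T)$. Where you go beyond the paper is in what follows. The printed proof only invokes Theorem \ref{t:dokl} at the base point to obtain ``$p_0<2$ or $q_0>2$'' and then pairs this disjunction with ``vertical or horizontal'' without further justification; as written it does not exclude a vertical segment with $p_0\geq 2$ (all of whose points would then have $q>2$ by Theorem \ref{t:dokl}) or a horizontal segment with $q_0\leq 2$ (all of whose points would have $p<2$). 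Your steps (b) and (c) close precisely these cases: in (b) you combine Proposition \ref{lemma-29-2} with Lemma \ref{l:qdisjoint} to show that membership of $(\frac1{p_0},\frac1{q_0})$ in $V(T)$ with $2<q_0\leq p_0$ forbids boundedness into any $L_r$ with $r>q_0$, which is incompatible with a second, smaller exponent $q_1$ on the same vertical segment (the needed inequality $q_1<q_0\leq p_0$ being exactly what makes Proposition \ref{lemma-29-2} applicable again at the secondary point); in (c) you reduce to (b) by duality via Proposition \ref{p:pleq2} and Schauder's theorem, correctly observing that (b) uses nothing about the operator being defined from $L_\infty$ to $L_1$. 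All hypotheses are verified where they matter, so your argument is not only correct but more complete than the one the paper gives.
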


\begin{proof}
Since $T\in V_{p_0,q_0}$ with $1<q_0\leq p_0$, by Theorem \ref{t:dokl}, it follows that $p_0<2$ or $q_0>2$. Now, take another point $(\frac1{p_1},\frac1{q_1})\in I$ and assume $p_1\neq p_0$ and $q_1\neq q_0$. Note that $(\frac1{p_\theta},\frac1{q_\theta})\in I\subset V(T)$ for every $\theta\in(0,1)$. In particular, $T\in S(L_{p_\theta},L_{q_\theta})$ for every $\theta\in (0,1)$. Since $\min\{\frac{q_0}{p_0},\frac{q_1}{p_1}\}\leq 1$, by Theorem \ref{t:extrapol}, we get that $T\in K(L_{p_\theta},L_{q_\theta})$, which is in contradiction with the fact that $I\subset V(T)$. Therefore, we must have that $p_1=p_0<2$ or $q_1=q_0>2$ and $I$ is respectively a vertical or horizontal segment.
\end{proof}

In general, Theorem \ref{t:extrapol} cannot be extended to the situation when $\min\{\frac{q_0}{p_0},\frac{q_1}{p_1}\}>1$ as the following shows.

\begin{theorem}\label{t:paralel}
For each $\lambda\in(0,1)$ set $S_\lambda=\{(p,q):1\leq p,q\leq \infty,\,\frac1p-\frac1q=\lambda\}$. Then
$$
\bigcap_{(p,q)\in S_\lambda}V_{p,q}\neq\emptyset.
$$
\end{theorem}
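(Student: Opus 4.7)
The plan is to construct a single operator $T$ that lies in $V_{p,q}$ for every $(p,q)\in S_\lambda$. The natural candidate is a direct sum of rescaled copies of a fixed strictly singular non-compact building block, placed on disjoint dyadic pieces of $[0,1]$ with weights tuned precisely to the slope $\lambda$.

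Fix pairwise disjoint measurable sets $(A_n), (B_n)\subset[0,1]$ with $\mu(A_n)=\mu(B_n)=2^{-n}$. Let $R_0\colon L_\infty[0,1]\to L_1[0,1]$ be a fixed strictly singular non-compact operator whose $L$-characteristic contains the upper triangle $\{(1/p,1/q):p\le q\}$; a convenient choice is the sum of the two basic ``Rademacher-to-disjoint'' operators from the proof of Theorem~\ref{t:dokl}, namely one factoring through $\ell_p\hookrightarrow\ell_2$ (handling the regime $p<2$) and one factoring through $\ell_2\hookrightarrow\ell_q$ (handling the regime $q>2$). Transferring $R_0$ via the obvious measure-scaling identifications $A_n\cong[0,1]\cong B_n$ yields operators $R_n\colon L_p(A_n)\to L_q(B_n)$ whose norms satisfy
\[
\|R_n\|_{L_p(A_n)\to L_q(B_n)}=\|R_0\|_{p,q}\cdot 2^{n\lambda}
\]
on the line $\tfrac{1}{p}-\tfrac{1}{q}=\lambda$. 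The operator is then
\[
Tf\;=\;\sum_{n\ge 1}2^{-n\lambda}\,R_n(f\chi_{A_n}).
\]

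I would verify three things. Boundedness of $T\colon L_p\to L_q$ along $S_\lambda$ follows from the disjointness of the images together with the embedding $\ell^p\hookrightarrow\ell^q$ (valid since $p<q$ on $S_\lambda$):
\[
\|Tf\|_q^q=\sum_n 2^{-n\lambda q}\|R_n(f\chi_{A_n})\|_q^q\le\|R_0\|^q\sum_n\|f\chi_{A_n}\|_p^q\le\|R_0\|^q\|f\|_p^q.
\]
Non-compactness is immediate: the image under $T$ of a Rademacher system supported on the single block $A_1$ is a normalized disjointly supported sequence in $L_q(B_1)$, which has no norm-convergent subsequence.

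Strict singularity is the delicate step. Given any normalized weakly null basic sequence $(f_k)$ in an infinite-dimensional subspace of $L_p$ with $\|Tf_k\|_q\ge c>0$, the boundedness estimate forces $\max_n\|f_k\chi_{A_n}\|_p\ge\delta$ for some $\delta>0$ and all $k$. Passing to subsequences splits into two cases: either (i) some $n_k\to\infty$ with $\|f_k\chi_{A_{n_k}}\|_p\ge\delta$, in which case the pieces $f_k\chi_{A_{n_k}}$ are disjointly supported in $L_p$ and span a copy of $\ell_p$, while the images $2^{-n_k\lambda}R_{n_k}(f_k\chi_{A_{n_k}})$ live disjointly in the distinct blocks $B_{n_k}$ with uniformly comparable $L_q$-norms, spanning a copy of $\ell_q$; the non-equivalence of $\ell_p$ and $\ell_q$ for $p\ne q$ precludes $T$ from being an isomorphism there; or (ii) the sequence concentrates on a single block $A_{n_0}$, and $T$ essentially reduces on this span to $R_{n_0}$, whose strict singularity is built into the construction of $R_0$.

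The main obstacle is ensuring that the base operator $R_0$ is simultaneously strictly singular $L_p\to L_q$ for every $(p,q)\in S_\lambda$, as required in case (ii). Since every $(p,q)\in S_\lambda$ satisfies $p<2$ or $q>2$ (the alternative $p\ge 2\ge q$ is incompatible with $\lambda>0$), the sum of the two operators from Theorem~\ref{t:dokl} achieves this: the $\ell_p\to\ell_2$ summand is strictly singular for $p<2$, and the $\ell_2\to\ell_q$ summand is strictly singular for $q>2$, so every point of $S_\lambda$ is covered by at least one summand, and the overall operator is strictly singular by the ideal property.
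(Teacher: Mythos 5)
Your architecture --- a weighted direct sum of rescaled copies of a fixed strictly singular non-compact block --- is genuinely different from the paper's, but it founders on the construction of the base operator $R_0$. You take $R_0=T+S$ with $T$ factoring through $\ell_p\hookrightarrow\ell_2$ and $S$ through $\ell_2\hookrightarrow\ell_q$, and argue that at each point of $S_\lambda$ at least one summand is strictly singular, ``so the overall operator is strictly singular by the ideal property.'' That inference is invalid in general (a sum of a strictly singular and a non--strictly-singular operator need not be strictly singular), but the more basic problem is boundedness: for $\lambda<\frac12$ the line $S_\lambda$ contains points with $p<q<2$, where the formal inclusion $\ell_2\hookrightarrow\ell_q$ is unbounded, so $S$ (hence $R_0$) is not an operator from $L_p$ to $L_q$ there; symmetrically, at points with $2<p<q$ the factor $\ell_p\hookrightarrow\ell_2$ is unbounded and $T$ fails. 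Even when $\lambda\ge\frac12$, the endpoints $p=1$ and $q=\infty$ of $S_\lambda$ break the Rademacher projection and embedding. Consequently the claimed $L$-characteristic $\{(\frac1p,\frac1q):p\le q\}$ for $R_0$ is false and the construction has no base to stand on. The repair is exactly the paper's proof: replace the detour through $\ell_2$ by the formal inclusion $\ell_p\hookrightarrow\ell_q$ itself, realized on $[0,1]$ by $Tf=\sum_k\mu(A_k)^{\lambda-1}\bigl(\int_{A_k}f\,d\mu\bigr)\chi_{A_k}$ for a disjoint sequence $(A_k)$. The weight $\mu(A_k)^{\lambda-1}$ sends $L_p$-normalized indicators to $L_q$-normalized ones precisely when $\frac1p-\frac1q=\lambda$, so this single operator is simultaneously bounded with norm one, strictly singular (it factors through $\ell_p\hookrightarrow\ell_q$, which is strictly singular whenever $p<q$) and non-compact for every $(p,q)\in S_\lambda$ --- which also makes your block-sum superstructure unnecessary.

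A secondary point: even granting a good $R_0$, your strict singularity step is incomplete. The dichotomy you invoke is not exhaustive (the negation of case (i) is concentration on finitely many blocks, not on a single block), and in case (i) you only show that the pieces $f_k\chi_{A_{n_k}}$ span a copy of $\ell_p$ while their images span a copy of $\ell_q$; you never relate $Tf_k$ to $T(f_k\chi_{A_{n_k}})$, i.e., you do not control the contribution of $f_k$ outside $A_{n_k}$, so no contradiction with $T$ being an isomorphism on $[f_k]$ is actually derived. These could be patched, but the failure of $R_0$ is what sinks the proof as written.
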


\begin{proof}
Given $\lambda\in(0,1)$ , let $(A_k)_{k\in\mathbb N}$ be a sequence of pairwise disjoint measurable sets in $[0,1]$ with $\mu(A_k)>0$. For $f\in L_1$, let us consider the operator defined by
$$
Tf=\sum_{k\in\mathbb N} \left(\mu(A_k)^{\lambda-1}\int_{A_k} fd\mu \right)\chi_{A_k}.
$$
We claim that for any $1\leq p<q\leq\infty$ such that $\frac1p-\frac1q=\lambda$, $T$ defines a bounded linear operator $T:L_p\rightarrow L_q$ with $\|T\|=1$. Indeed, note first that using H\"older's inequality it follows that
$$
\int_{A_k} fd\mu \leq\| f\chi_{A_k}\|_{p}\|\chi_{A_k}\|_{p'}=\| f\chi_{A_k}\|_{p}\,\mu(A_k)^{1-\frac1p}.
$$
Hence,
\begin{align*}
\|Tf\|_{q}&\leq \Big(\sum_{k\in\mathbb N}\Big(\mu(A_k)^{\lambda-1}\| f\chi_{A_k}\|_{p}\,\mu(A_k)^{1-\frac1p}	\Big)^q\mu(A_k)\Big)^{\frac1q}	 \\
&=\Big(\sum_{k\in\mathbb N}\| f\chi_{A_k}\|_{p}^q\Big)^{\frac1q}	\\
&\leq\Big(\sum_{k\in\mathbb N}\| f\chi_{A_k}\|_{p}^p\Big)^{\frac1p}	\\
&\leq\|f\|_{p}.
\end{align*}
This shows that $\|T\|\leq1$, and since for any $k\in\mathbb N$
\begin{equation}\label{A_k}
T(\mu(A_k)^{-\frac1p}\chi_{A_k})=\mu(A_k)^{-\frac1q}\chi_{A_k},
\end{equation}
it follows that $\|T\|=1$, as claimed.

Now, observe that $T:L_p\rightarrow L_q$ can be written as
$$
\xymatrix{L_p\ar_{P}[d]\ar^T[rr]&&L_q\\
\ell_p\ar@{^{(}->}^i[rr]&&\ell_q \ar_{Q}[u]}
$$
where $P$ is the averaging projection onto the linear span of $(\chi_{A_k})$ in $L_p$, $i=i_{p,q}$ is the formal inclusion from $\ell_p$ to $\ell_q$, and $Q$ is the isomorphic embedding of $\ell_q$ in $L_q$ via the functions $(\chi_{A_k})$. Therefore, $T\in S(L_p,L_q)$ and taking \eqref{A_k} into account it follows that $T\notin K(L_p,L_q)$. Since this holds for every $1\leq p<q\leq \infty$ such that $\frac1p-\frac1q=\lambda$ we have that
$$
T\in \bigcap_{(p,q)\in S_\lambda}V_{p,q}.
$$
\end{proof}

\medskip

\begin{example}\label{example2parts}
\textit{Given $1 < p< 2 < q <\infty$, there is an operator  $T:L_\infty\rightarrow L_1$ such that
$$
V(T) = \Big\{\Big(\frac1p,\frac1q\Big) \Big\}
$$
}

Indeed, let us consider a function  $g \in L_q \setminus \cup_{r >q} \, L_r [0, \frac12]$ \, and  \,$ h \in L_{p'}  \setminus \cup_{r >p' } L_r [0, \frac12]$,  where \,$\frac{1}{p}+\frac{1}{p' }= 1$.  Define the rank-one operator $T_1:L_p[0,\frac12] \rightarrow L_{q}[0,\frac12]$ given by
$$
T_{1} f(t) = \left(\int_{0}^{ \frac12} f(s) h(s) d\mu \right) \,  g(t).
$$
Due to the choice of $g$ and $h$, it is easy to see that the $L$-characteristic set of $T_1$ is
$$
L(T_1) = \Big\{\Big(\frac1r,\frac1s\Big): p\leq r \leq \infty , 1\leq s \leq q  \Big\},
$$
and since $T\in K(L_r,L_s)$ for $p\leq r \leq \infty,\, 1\leq s \leq q$, we have $V(T_{1}) = \emptyset$.

Now, let $T_{2}: L_p[\frac12, 1]\rightarrow L_{q}[\frac12, 1]$ be the operator given by
$$
T_2f=\sum_{k\in\mathbb N} \left(\mu(A_k)^{\lambda-1}\int_{A_k} fd\mu\right) \chi_{A_k} ,
$$
where $(A_k)_{k\in\mathbb N}$ is a sequence of pairwise disjoint measurable sets in $[\frac12,1]$ with $\mu(A_k)>0$ and $\lambda=\frac1{p}-\frac1{q}$. It follows that
$$
L(T_{2})=  \Big\{\Big(\frac1r,\frac1s\Big):  \frac1r- \frac1s\leq \lambda  \Big\}
$$
while
$$
V(T_{2})=\Big\{\Big(\frac1r,\frac1s\Big):  \frac1r - \frac1s = \lambda  \Big\}.
$$
Finally, consider the operator $T(f) = T_{1}(f\chi_{[0,\frac12]}) + T_{2}(f\chi_{[\frac12,1]})$, which satisfies that $L(T) = L(T_{1})$ and $V(T) = \{(\frac1p,\frac1q)\}$.
\end{example}

\vspace{2mm}
\begin{example}\label{example3parts}
\textit{Given $p_1< 2<q_0$ and $p_0<q_0$, $p_1<q_1$ such that $\frac{1}{q_0}-\frac{1}{p_0}=\frac{1}{q_1}-\frac{1}{p_1}=\lambda$, there exists an operator $T:L_\infty\rightarrow L_1$ such that
$$
\begin{array}{ccl}
V(T)&=&\{(\frac1p,\frac1q):\frac{1}{q}-\frac{1}{p}=\lambda, \,p\in[p_0,p_1],\,q\in[q_0,q_1]\}\,\cup\\
&&\\
&&\{(\frac1p,\frac1{q_0}):p\geq p_0\}\,\cup\,\{(\frac1{p_1},\frac1q):q\leq q_1\}
\end{array}
$$
}

Indeed, let $T_1:L_p[0,\frac13]\rightarrow L_{q_0}[0,\frac13]$ be given by the composition of projecting onto the span of the Rademacher sequence on $[0,\frac13]$, then the formal inclusion $i_{2,q_0}:\ell_2\rightarrow \ell_{q_0}$ and finally embedding $\ell_{q_0}$ into $L_{q_0}[0,\frac13]$ via a sequence of disjoint functions; let $T_2:L_p[\frac13,\frac23]\rightarrow L_{q}[\frac13,\frac23]$ be given by
$$
T_2f=\sum_{k\in\mathbb N} \left(\mu(A_k)^{\lambda-1}\int_{A_k} fd\mu\right) \chi_{A_k},
$$
where $(A_k)_{k\in\mathbb N}$ is a sequence of pairwise disjoint measurable sets in $[\frac13,\frac23]$ with $\mu(A_k)>0$ and $\lambda=\frac1{p_0}-\frac1{q_0}$; let $T_3:L_{p_1}[\frac23,1]\rightarrow L_{q}[\frac23,1]$ be given by projecting first on the span of a sequence of pairwise disjoint functions in $L_{p_1}[\frac23,1]$, then compose with the formal inclusion $i_{p_1,2}:\ell_{p_1}\rightarrow \ell_2$, and finally compose this with the embedding of $\ell_2$ in $L_q[\frac23,1]$ via a sequence of Rademacher functions. Now the operator $T=T_1+T_2+T_3$  has the desired properties.
\end{example}

\begin{figure}
\centering
\ifx\JPicScale\undefined\def\JPicScale{0.5}\fi
\unitlength \JPicScale mm
\begin{picture}(110,110)(0,0)
\linethickness{0.3mm}
\put(-0.2,0){\line(1,0){100.1}}
\linethickness{0.3mm}
\put(0,0){\line(0,1){100}}
\linethickness{0.3mm}
\put(80,0){\line(0,1){80}}
\put(0,80){\line(1,0){80.2}}
\linethickness{0.3mm}
\put(0,20){\line(1,0){35}}
\linethickness{0.3mm}
\multiput(35,20)(0.12,0.12){210}{\line(1,0){0.12}}
\linethickness{0.3mm}
\put(60,45){\line(0,1){35}}
\put(35,15){\makebox(0,0)[cc]{\tiny{$(\!\frac1{p_0},\!\frac1{q_0}\!)$}}}
\put(71,45){\makebox(0,0)[cc]{\tiny{$\!(\!\frac1{p_1},\!\frac1{q_1}\!)$}}}
\put(98,0){\makebox(0,0)[cc]{$>$}}
\put(105,0){\makebox(0,0)[cc]{\tiny{$\frac1p$}}}
\put(0,98){\makebox(0,0)[cc]{$\wedge$}}
\put(0,107){\makebox(0,0)[cc]{\tiny{$\frac1q$}}}
\put(-2,40){\makebox(0,0)[cc]{\tiny{$\frac12-$}}}
\put(40,-3){\makebox(0,0)[cc]{\tiny{$\overset{|}{\frac12}$}}}
\end{picture}
\caption{The set $V(T)$ of Example \ref{example3parts}.\label{figureVT}}
\end{figure}
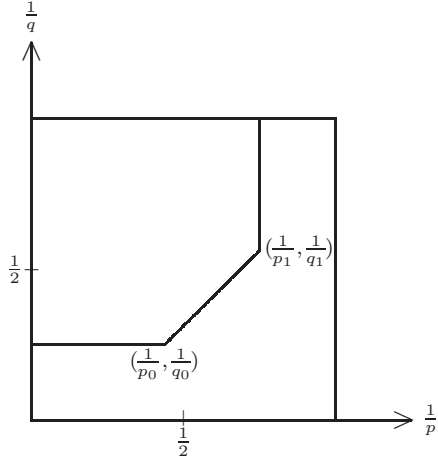

The above motivates the following open question: Given an operator $T:L_\infty\rightarrow L_1$, does the set $V(T)$ always look like that of Figure \ref{figureVT}?

\bigskip
\bigskip

\section{Interpolation}\label{s:interpolation}

Recall that, in general, strictly singular operators are not suitable for interpolation properties (cf. \cite{Beucher,Heinrich}). Here we analyze this question providing a positive answer for operators between $L_{p}-L_{q}$ spaces. Our previous result \cite[Theorem 4.2]{HST} could be considered as a preliminary version of this fact valid just for endomorphism on $L_{p}$-spaces.

\begin{theorem}\label{t:interpol}
Let $1\leq p_0,p_1,q_0,q_1<\infty$. If an operator $T:L_{p_0}\rightarrow L_{q_0}$ is strictly singular and $T:L_{p_1}\rightarrow L_{q_1}$ is bounded, then $T:L_{p_\theta}\rightarrow L_{q_\theta}$ is strictly singular for each $\theta\in(0,1)$.
\end{theorem}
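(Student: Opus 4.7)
The plan is to argue by contradiction. Suppose $T\colon L_{p_\theta}\to L_{q_\theta}$ is not strictly singular; then by the Bessaga--Pe\l czy\'nski selection principle there is a normalized basic sequence $(x_n)\subset L_{p_\theta}$, which we may further assume is weakly null (using reflexivity of $L_{p_\theta}$ when $p_\theta>1$; the degenerate case $p_\theta=1$ forces $p_0=p_1=1$ and is handled separately), such that $T$ is bounded below on $[x_n]$, say $\|Tx_n\|_{q_\theta}\ge c>0$ and $\|T(\sum a_ix_i)\|_{q_\theta}\ge c\|\sum a_ix_i\|_{p_\theta}$. The goal is to extract from $(x_n)$ an infinite-dimensional subspace of $L_{p_0}$ on which $T\colon L_{p_0}\to L_{q_0}$ is still bounded below, contradicting $T\in S(L_{p_0},L_{q_0})$.

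The main tool is the Kade\v{c}--Pe\l czy\'nski dichotomy (Theorem \ref{Kadec-Pelc}), applied both to $(x_n)$ in $L_{p_\theta}$ and to $(Tx_n)$ in $L_{q_\theta}$, combined with Lyapunov's inequality $\|f\|_{q_\theta}\le\|f\|_{q_0}^{1-\theta}\|f\|_{q_1}^\theta$ and the endpoint boundedness $\|T\colon L_{p_1}\to L_{q_1}\|<\infty$. When $(x_n)$ admits an almost disjoint subsequence I would reduce, via Dor's Theorem \ref{Dor}, to a disjointly supported equi-measurable sequence, so that $\|x_n\|_r=\alpha_r$ is independent of $n$ with $\alpha_{p_\theta}=1$ and Lyapunov forcing $\alpha_{p_0}^{1-\theta}\alpha_{p_1}^\theta\ge1$. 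A direct computation combining this with Lyapunov applied to $(Tx_n)$ and with $\|Tx_n\|_{q_1}\le\|T\|_{p_1\to q_1}\alpha_{p_1}$ then yields that the renormalized sequence $\tilde x_n=x_n/\alpha_{p_0}$, which is disjoint and normalized in $L_{p_0}$ and hence equivalent to the $\ell_{p_0}$-basis, satisfies $\|T\tilde x_n\|_{q_0}\ge\delta>0$ uniformly. A further application of Kade\v{c}--Pe\l czy\'nski to $(T\tilde x_n)\subset L_{q_0}$, together with an analysis of the cotype-type inequalities akin to those used in the proof of Theorem \ref{t:dokl}, will upgrade this pointwise lower bound to a lower bound on the whole span $[\tilde x_n]$. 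When $(x_n)$ is instead strongly embedded in $L_{p_\theta}$, all norms $\|\cdot\|_r$ with $r\le p_\theta$ are equivalent on $[x_n]$ to $\|\cdot\|_{p_\theta}$; combined with the analogous property for $(Tx_n)$ in $L_{q_\theta}$ (after a second Kade\v{c}--Pe\l czy\'nski application), this transfers the isomorphism property directly from $L_{p_\theta}\to L_{q_\theta}$ to $L_{p_0}\to L_{q_0}$, at least when $p_0\le p_\theta$.

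The hard part will be the \emph{mixed} configurations---$(x_n)$ of $\ell_{p_\theta}$-type while $(Tx_n)$ is of $\ell_2$-type, or vice versa---and the configuration $p_0>p_\theta$, where $(x_n)\subset L_{p_\theta}$ is not a priori a sequence in $L_{p_0}$. In the mixed cases Lyapunov alone is too crude and the argument must rely on cotype comparisons between $\ell_{p_\theta}$ and $\ell_2$, exactly as in the proof of Theorem \ref{t:dokl}, together with the equi-measurable reduction of Proposition \ref{lemma-29-2}, in order to rule out the potential incompatibility between the domain and image structures. In the second situation a uniform truncation $y_n=x_n\chi_{\{|x_n|\le M\}}$ is required to force $(x_n)$ into $L_{p_0}$, and the choice of $M$ has to be made so that, after extracting a suitable subsequence, both the basis-type structure of $(y_n)$ in $L_{p_\theta}$ and the lower bound $\|Ty_n\|_{q_\theta}\ge c/2$ are preserved; this perturbation/extraction step, which hinges on the uniform integrability of $|x_n|^{p_\theta}$ in the strongly embedded case and on the disjoint geometry in the almost disjoint case, is expected to be the most delicate point of the argument.
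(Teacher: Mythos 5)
There is a genuine gap at the heart of your plan, and it is not the technical truncation step you flag but the endgame itself. Your strategy is to extract from $(x_n)$ a seminormalized (after Dor's theorem, disjointly supported) sequence $(\tilde x_n)$ in $L_{p_0}$ with $\|T\tilde x_n\|_{q_0}\geq\delta$ and then to ``upgrade this pointwise lower bound to a lower bound on the whole span'' in order to contradict $T\in S(L_{p_0},L_{q_0})$. But a pointwise lower bound on a disjoint normalized sequence only contradicts compactness (more precisely, M-weak compactness), never strict singularity: the paper's own Theorem \ref{t:paralel} exhibits strictly singular operators $T:L_p\rightarrow L_q$ carrying the disjoint normalized sequence $\mu(A_k)^{-1/p}\chi_{A_k}$ to the disjoint normalized sequence $\mu(A_k)^{-1/q}\chi_{A_k}$, and Theorem \ref{t:dokl} shows $V_{p_0,q_0}\neq\emptyset$ whenever $p_0<2$ or $q_0>2$. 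Concretely, if $(\tilde x_n)$ spans $\ell_{p_0}$ in $L_{p_0}$ while $(T\tilde x_n)$ is equivalent to the $\ell_2$- or $\ell_{q_0}$-basis with $p_0<\min\{2,q_0\}$, there is no lower estimate on the span and no contradiction; the cotype comparison of Theorem \ref{t:dokl} only rules this out in the quadrant $p_0\geq 2\geq q_0$. So the reduction you propose cannot be closed in exactly the ``mixed'' configurations you defer. A secondary but also real problem is the case $p_0>p_\theta$ with $[x_n]$ strongly embedded: such a subspace of $L_{p_\theta}$ need not consist of functions lying in $L_{p_0}$ at all, and truncation $x_n\chi_{\{|x_n|\leq M\}}$ destroys the linear span, so there is no candidate subspace of $L_{p_0}$ to test strict singularity on.

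The paper avoids transferring the isomorphism to the endpoint altogether. It first spreads the hypothesis: $T\in S(L_r,L_s)$ for all $r\geq p_0$, $s\leq q_0$ by the ideal property, hence $T\in K(L_r,L_s)$ for $r\geq\max\{2,p_0\}$, $s\leq\min\{2,q_0\}$ by Theorem \ref{t:dokl}, and Krasnoselskii's theorem then gives $T\in K(L_r,L_{q_\theta})$ for $r>p_\theta$ and $T\in K(L_{p_\theta},L_s)$ for $s<q_\theta$. Working at $(p_\theta,q_\theta)$, a subspace $X$ on which $T$ is invertible cannot have $T(X)$ strongly embedded (that would contradict the second compactness statement), so Kade\v{c}--Pe\l czy\'nski forces $X\cong T(X)\cong\ell_{q_\theta}$, complemented in $L_{p_\theta}$ and $L_{q_\theta}$ respectively; since $\ell_q$ is complemented in $L_p$ only for $q\in\{2,p\}$, only the cases $p_\theta=q_\theta$ and $q_\theta=2$ survive, and these are dispatched by Theorem \ref{interpolationPAMS}, the extrapolation Theorem \ref{t:extrapol}, and an equi-integrability/AM-compactness argument. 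Your outline uses none of these ingredients, and without something playing their role the contradiction you aim for is simply not there.
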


\begin{proof}
Assume first that $q_\theta>1$. Note that $T\in S(L_{r},L_{s})$ whenever $(r,s)\in [p_0,\infty]\times[1,q_0]$. Therefore, by Theorem \ref{t:dokl} it follows that $T\in K(L_r,L_s)$ when $r\geq \max\{2,p_0\}$ and $s\leq\min\{q_0,2\}$. Hence, by Kranoselskii's Theorem \ref{Krasnoselskii} it follows that
\begin{equation}\label{eq:Krqtheta}
  T\in K(L_r,L_{q_\theta})\,\,\,\textrm{for every }r>p_\theta.
\end{equation}

Similarly, we also have that
\begin{equation}\label{eq:Kpthetas}
  T\in K(L_{p_\theta},L_s)\,\,\,\textrm{for every }s<q_\theta.
\end{equation}

Now, suppose, for the sake of contradiction that the operator $T:L_{p_\theta}\rightarrow L_{q_\theta}$ is not strictly singular. Thus, there exist an infinite dimensional subspace $X\subset L_{p_\theta}$ and $c>0$ such that
$$
\|Tx\|_{q_\theta}\geq c\|x\|_{p_\theta}
$$
for every $x\in X$, in other words, the restriction $T|_X$ is an isomorphism onto $T(X)$. Note that if the subspace $T(X)$ were strongly embedded in $L_{q_\theta}$, then for every $s<q_\theta$ we would have
$$
\|Tx\|_s\approx \|Tx\|_{q_\theta}\geq c\|x\|_{p_\theta}
$$
which is impossible by \eqref{eq:Kpthetas}. Therefore, by Kadec-Pelczynski Theorem \ref{Kadec-Pelc}, it follows that $T(X)$ contains an almost disjoint sequence, so going to a further subspace, we can assume that $X$ and $T(X)$ are isomorphic to $\ell_{q_\theta}$ and are complemented respectively in $L_{p_\theta}$ and $L_{q_\theta}$. Note that for $1<p<\infty$ the space $\ell_q$ is complemented in $L_p$ only when $q=2$ or $q=p$, while for $p=1$ this only holds when $q=1$ (see for instance \cite[Proposition 5.6.1 and Theorem 6.4.21]{AK}). Therefore, the statement follows whenever $p_\theta\neq q_\theta\neq 2$.

In order to complete the proof we will consider the following three cases separately:
\begin{enumerate}
\item $p_\theta=q_\theta$.
\item $p_\theta>2=q_\theta$.
\item $p_\theta<2=q_\theta$.
\end{enumerate}

Note that if $p_0=p_1$, then the result follows from (\cite{Beucher} Proposition 2.1 or \cite{Heinrich} Proposition 1.6); while if $q_0=q_1=q_\theta>1$, then in the first two cases, using the stability under duality Theorem \ref{t:dualitySS}, this can always be reduced to the previous one. Hence, in (1) and (2) , we can assume $p_0\neq p_1$ and $q_0\neq q_1$.

(1) In the case that $p_\theta=q_\theta$, we have several possibilities: either $p_0=q_0$ and $p_1=q_1$ in which case the result follows from Theorem \ref{interpolationPAMS}, or $\min\{\frac{q_0}{p_0},\frac{q_1}{p_1}\}<1$ in which case we can pick another $\theta'\in(0,1)\backslash\{\theta\}$ so that $p_{\theta'}\neq q_{\theta'}$, hence by the above part of the proof we have that $T\in S(L_{p_{\theta'}},L_{q_{\theta'}})$, but in this case, since $p_0\neq p_1$ and $q_0\neq q_1$, by Theorem \ref{t:extrapol}, it follows that $T\in K(L_{p_\theta},L_{q_\theta})$.
\medskip

(2) If $p_\theta>2=q_\theta$, then necessarily $\min\{\frac{q_0}{p_0},\frac{q_1}{p_1}\}<1$. Hence, if we pick $\theta'\in(0,1)\backslash\{\theta\}$ so that $p_{\theta'}\neq q_{\theta'}\neq 2$, then by the above part of the proof we have that $T\in S(L_{p_{\theta'}},L_{q_{\theta'}})$. As before, since $p_0\neq p_1$ and $q_0\neq q_1$, by Theorem \ref{t:extrapol}, it follows that $T\in K(L_{p_\theta},L_{q_\theta})$.
\medskip

(3) Assume now $p_\theta<2=q_\theta$. Note that since the sequence $(x_n)\subset X$ is  equivalent to the unit vector basis of $\ell_2$, then up to a further subsequence  $(x_n)$ must be equi-integrable in $L_{p_\theta}$. Indeed, by the subsequence splitting property, up to a further subsequence, we can write $x_n=g_n+h_n$ with $|g_n|\wedge|h_n|=0$, $(g_n)$ equi-integrable and $(h_n)$ disjoint. Now, let us suppose that the disjoint part $(h_{n})$ satisfy $\|h_n\|_{p_\theta}\geq K>0$ for every $n\in\mathbb N$. Then, by \cite[Theorem 1.d.6]{LT2} we have
$$
n^{\frac{1}{2}}\approx\Big\|\sum_{k=1}^n x_k\Big\|_{p_\theta}\approx\Big\|\Big(\sum_{k=1}^n |x_k|^2\Big)^{\frac12}\Big\|_{p_\theta}\geq\Big\|\Big(\sum_{k=1}^n |h_k|^2\Big)^{\frac12}\Big\|_{p_\theta}\approx n^{\frac{1}{p_\theta}},
$$
and since $p_\theta<2$ this is a contradiction for large $n$. Hence, we can assume that $(x_n)$ is equi-integrable in $L_{p_\theta}$. Now, it easily follows from \eqref{eq:Krqtheta} that $T:L_{p_\theta}\rightarrow L_2$ is in fact AM-compact, so $\|Tx_n\|_2\rightarrow 0$. This is a contradiction with the assumption that $T|_X$ was an isomorphism, and the proof is finished when $q_\theta>1$.

Finally, it remains to consider the case when $q_\theta=1$, which implies that $q_0=q_1=1$. In this case, since $T\in S(L_{p_0},L_1)$, we have that $T\in S(L_r,L_1)$ for every $r\geq p_0$, so in particular $T\in K(L_r,L_1)$ for $r\geq\max\{p_0,2\}$ by Theorem \ref{t:dokl}. Hence, by  Theorem \ref{Krasnoselskii}, it follows that $T\in K(L_{p_\theta},L_1)$.
\end{proof}

Note that the inclusion operator $T:L_\infty\rightarrow L_q$ is strictly singular for $1\leq q<\infty$, while $T:L_p\rightarrow L_q$ is invertible on the span of the Rademacher functions. This shows that $p_0<\infty$ in Theorem \ref{t:interpol} is a necessary condition.

\begin{corollary}\label{c:hit2<q<p}
Let $1<p_i,q_i<\infty$ for $i=0,1$ with  $q_0\neq q_1$, $p_0\neq p_1$. Suppose $\min\{\frac{q_0}{p_0},\frac{q_1}{p_1}\}\leq 1$, or $\min\{\frac{q_0}{p_0},\frac{q_1}{p_1}\}> 1$ and $\frac{q_1-q_0}{p_1-p_0}<0$. If an operator $T: L_{p_0}\rightarrow L_{q_0}$ is strictly singular and $T:L_{p_1}\rightarrow L_{q_1}$ is bounded, then $T\in K(L_{p_\theta},L_{q_\theta})$ for every $\theta\in(0,1)$.
\end{corollary}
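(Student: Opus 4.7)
The plan is to combine the two main results that were just established, namely the interpolation Theorem \ref{t:interpol} and the extrapolation Theorem \ref{t:extrapol}. Since the hypotheses of the corollary impose $1<p_i,q_i<\infty$ with $p_0\neq p_1$ and $q_0\neq q_1$, together with exactly the same geometric restriction on the pairs $(p_i,q_i)$ that appears in Theorem \ref{t:extrapol}, the proof should be an essentially immediate two-step argument; no new ingredients are needed.

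First, I would apply Theorem \ref{t:interpol} to the operator $T$. The hypothesis that $T\colon L_{p_0}\to L_{q_0}$ is strictly singular and $T\colon L_{p_1}\to L_{q_1}$ is bounded is exactly the input of that theorem, so I conclude that $T\colon L_{p_\theta}\to L_{q_\theta}$ is strictly singular for every $\theta\in(0,1)$. In particular, strict singularity holds at some (and actually every) intermediate pair on the interpolation segment.

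Second, I would feed this back into Theorem \ref{t:extrapol}. The operator $T$ is bounded between $L_{p_i}$ and $L_{q_i}$ for $i=0,1$ (one from the hypothesis, the other by the just-observed strict singularity, which implies boundedness), and strictly singular at some intermediate pair $(p_{\theta},q_{\theta})$. The dichotomy on $\min\{q_0/p_0,q_1/p_1\}$ assumed in the corollary is precisely the dichotomy required by Theorem \ref{t:extrapol}. Therefore, the extrapolation theorem concludes that $T\in K(L_{p_\tau},L_{q_\tau})$ for every $\tau\in(0,1)$, which is what we needed.

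There is no real obstacle to this argument — the delicate technical work has already been absorbed into Theorems \ref{t:interpol} and \ref{t:extrapol}. The only point that deserves a sentence in the write-up is to verify explicitly that the assumption $p_0\neq p_1$, $q_0\neq q_1$ (excluded in some degenerate sub-cases of Theorem \ref{t:interpol}) is consistent with invoking the extrapolation step, so that one is really in the setting of Theorem \ref{t:extrapol} and not in the excluded cases of Theorem \ref{interpolationPAMS}.
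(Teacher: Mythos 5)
Your proposal is correct and coincides with the paper's own argument: the authors also prove this corollary by first applying Theorem \ref{t:interpol} to get strict singularity at the intermediate pairs and then invoking Theorem \ref{t:extrapol} to upgrade this to compactness along the whole segment. No gaps.
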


\begin{proof}
This is a direct consequence of Theorems \ref{t:interpol} and  \ref{t:extrapol}.
\end{proof}

Note that the domain restriction in this Corollary is a necessary condition as Theorem \ref{t:paralel} shows.

Recall that an operator between Banach spaces $T:X\rightarrow Y$ is \emph{super strictly singular} (or finitely strictly singular) if for every $\varepsilon>0$ there is $N\in\mathbb N$ such that every subspace $F\subset X$ with dimension greater than $N$ contains a vector $x\in F$ such that $\|Tx\|\leq\varepsilon\|x\|$. This class of operators provide an asymptotic version of strictly singular operators (see \cite{Pli:04}).

In particular, an operator $T:X\rightarrow Y$ is super strictly singular if and only if, for every ultrafilter $\mathscr U$, the corresponding ultraoperator $T_{\mathscr U}:X_{\mathscr U}\rightarrow Y_{\mathscr U}$ is strictly singular. Since every ultrapower $(L_p)_{\mathscr U}$ is another $L_p$-space (over a larger measure space), it follows from Theorem \ref{t:interpol} that super strictly singular operators between $L_p$ spaces can also be interpolated.

Recall that for $T:X\rightarrow Y$ and every $n\in \mathbb N$, the Bernstein numbers $b_n(T)$ are defined as
$$
b_n(T)=\sup\Big\{\inf\Big\{\frac{\|Tx\|}{\|x\|}:x\neq 0\in F\Big\}: F\subset X,\,\textrm{dim}(F)=n\Big\}.
$$
These numbers play an important role in approximation theory and in the study of super strictly singular operators (see for instance \cite{FHR,HRS}). In particular, an operator $T$ is super strictly singular if and only if $b_n(T)\underset{n\rightarrow\infty}\longrightarrow0$. Considering the above comments, it would be interesting to study the interpolation properties of Bernstein numbers for operators between $L_p$ spaces.

\bigskip

\section{Strictly singular operators on other Banach lattices}\label{s:sss}

In this section we address the relation of strict singularity with compactness and related notions for operators between Banach lattices. Recall that given a Banach space $X$, an operator $T:E\rightarrow F$ is called $X$-\emph{singular} if it is never an isomorphism when restricted to a subspace isomorphic to $X$. The following rigidity result is an abstract version of Theorem \ref{t:dokl}, and can also be considered as a natural extension of \cite[Proposition 2.1]{FHKT}:

\begin{theorem}\label{t:ell2compact}
Let $E$ be a Banach lattice with type 2 and an unconditional basis, and $F$ be a Banach lattice satifying a lower 2-estimate. If an operator $T:E\rightarrow F$ is $\ell_2$-singular, then it is compact. In particular, $K(E,F)=S(E,F)$.
\end{theorem}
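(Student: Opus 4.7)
The plan is to argue by contrapositive: supposing $T$ is not compact, produce an infinite dimensional subspace of $E$ isomorphic to $\ell_2$ on which $T$ acts as an isomorphism, contradicting $\ell_2$-singularity. The identity $K(E,F)=S(E,F)$ then follows since strict singularity always implies $\ell_2$-singularity, giving $S(E,F)\subseteq K(E,F)$; the reverse inclusion is automatic.

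Assuming $T\notin K(E,F)$, one first chooses a normalized weakly null sequence $(x_n)\subset E$ with $\|Tx_n\|_F\geq\lambda>0$. Using the unconditional basis of $E$ together with the Bessaga--Pelczynski selection principle, pass to a subsequence that, up to a small perturbation, is a normalized block basis of the unconditional basis; in particular $(x_n)$ may be treated as pairwise disjoint with respect to the $1$-unconditional lattice structure induced by the basis. The type $2$ hypothesis applied to the scaled sequence $(a_k x_k)$, together with the fact that $\|\sum_k a_k x_k\|_E$ is invariant under sign changes on disjoint vectors, produces the upper estimate
\[
\Bigl\|\sum_k a_k x_k\Bigr\|_E\leq C\Bigl(\sum_k|a_k|^2\Bigr)^{1/2}.
\]

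Next apply the Kadec--Pelczynski dichotomy (Theorem \ref{Kadec-Pelc}) to $(Tx_n)$ in $F$; the lower $2$-estimate entails cotype $2$, hence order continuity, so the theorem applies. In the almost disjoint alternative one may assume $(Tx_n)$ is disjoint in $F$, and the lower $2$-estimate gives
\[
\lambda\Bigl(\sum_k|a_k|^2\Bigr)^{1/2}\leq\Bigl(\sum_k|a_k|^2\|Tx_k\|_F^2\Bigr)^{1/2}\leq C'\Bigl\|\sum_k a_k Tx_k\Bigr\|_F\leq C'\|T\|\Bigl\|\sum_k a_k x_k\Bigr\|_E.
\]
Combined with the upper estimate above, this shows that $(x_k)$ is equivalent to the unit basis of $\ell_2$ and that $T$ is an isomorphism from $[x_k]$ onto $[Tx_k]$, which is the desired contradiction.

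The main obstacle is the second (strongly embedded) alternative of Kadec--Pelczynski, in which $(Tx_n)$ carries the $F$-norm equivalent to the ambient $L_1$-norm on its closed linear span. Here the plan is to invoke cotype $2$ of $F$ together with the Khintchine--Maurey inequality and the uniform $L_1$-integrability coming from strong embedding to extract a subsequence $(Tx_{n_k})$ equivalent to the $\ell_2$-basis, in the spirit of Kadec--Pelczynski's Corollary~5 for $L_p$ with $p>2$; the argument then closes as in the disjoint case by combining this lower $\ell_2$-estimate on $(Tx_{n_k})$ with the upper $\ell_2$-estimate from type $2$ on $E$. Establishing this abstract $\ell_2$-extraction in the present generality is the most delicate step, and I expect it to be the main technical hurdle.
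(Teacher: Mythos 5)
Your strategy is structurally the same as the paper's: assume $T$ is not compact, extract a normalized weakly null sequence $(x_n)$ with $\|Tx_n\|_F\geq\delta$, use type 2 of $E$ together with unconditionality to obtain the upper estimate $\|\sum a_k x_k\|_E\leq M(\sum a_k^2)^{1/2}$, and then run the Kadec--Pelczynski dichotomy (Theorem \ref{Kadec-Pelc}) on $(Tx_n)$, closing the disjoint alternative via the lower 2-estimate of $F$ exactly as you do. One minor omission: the very first step, producing a weakly null sequence from non-compactness, needs justification; the paper gets it by noting that type 2 prevents $E$ from containing $c_0$ or $\ell_1$, whence $E$ is reflexive by \cite[Theorem 1.c.5]{LT2}.

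The substantive issue is the strongly embedded alternative, which you explicitly leave open as ``the main technical hurdle''; this is a genuine gap, and it is precisely the step the paper closes with a citation rather than a computation. When $(\|Tx_n\|_{1})$ is bounded away from zero, the paper invokes Aldous and Fremlin's theorem on colacunary sequences in $L$-spaces \cite[6.Theorem]{Aldous-Fremlin}: a weakly null sequence in $L_1$ whose norms do not tend to zero has a subsequence satisfying a lower 2-estimate $\|\sum_k a_k Tx_{n_k}\|_1\geq C(\sum_k a_k^2)^{1/2}$, which transfers to the $F$-norm since $\|\cdot\|_F$ dominates $\|\cdot\|_1$ on the relevant ideal, and the contradiction with $\ell_2$-singularity then follows as in the disjoint case. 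Your proposed route via cotype of $F$, Khintchine--Maurey and uniform integrability points in the right direction (weak nullness in $L_1$ does give uniform integrability by Dunford--Pettis), but as written it is a plan rather than a proof; the Aldous--Fremlin theorem is the precise statement you need, and you should either quote it or reprove it. A small technical caveat: a lower 2-estimate yields $r$-concavity, hence cotype $r$, only for $r>2$, not cotype 2 on the nose; this is harmless, since all that is needed to apply Theorem \ref{Kadec-Pelc} is order continuity of $F$, which follows because a lower 2-estimate excludes lattice copies of $c_0$.
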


\begin{proof}
Let us assume that $T:E\rightarrow F$ is not compact. Since $E$ has type 2, it cannot contain subspaces isomorphic to $c_0$ nor $\ell_1$, hence by \cite[Theorem 1.c.5]{LT2}, the space $E$ is reflexive. Hence, there exists a normalized weakly null sequence $(x_n)\subset E$ such that for some $\delta>0$ we have $\|Tx_n\|_F\geq\delta$ for every $n\in\mathbb N$.

Without loss of generality, we can assume that $(x_n)$ is a 1-unconditional basic sequence. Hence, using that $E$ has type 2, there is $M>0$ such that for scalars
$(a_n)_{n=1}^m$ we have
$$
\Big\|\sum_{n=1}^m a_n x_n\Big\|_E=\int_0^1\Big\|\sum_{n=1}^m a_n r_n(t) x_n\Big\|_E dt\leq M\Big(\sum_{n=1}^m a_n^2\Big)^\frac12.
$$
Now, by Kadec-Pelczynski Theorem \ref{Kadec-Pelc}, either the sequence $(\|Tx_n\|_1)$ is
bounded away from zero, or $(Tx_n)$ has a subsequence equivalent to
a disjoint sequence.

Suppose first that $(\|Tx_n\|_{L_1})$ is bounded away from zero,
then by \cite[6.Theorem]{Aldous-Fremlin}, there exist $C>0$ and a subsequence $(Tx_{n_k})$ such that for scalars
$(a_k)_{k=1}^m$ we have
\begin{align*}
M\|T\|\Big(\sum_{k=1}^m|a_k|^2\Big)^{\frac{1}{2}} &\geq\|T\|\Big\|\sum_{k=1}^m a_k
x_{n_k}\Big\|_E \geq\Big\|\sum_{k=1}^m a_k
Tx_{n_k}\Big\|_F\\
&\geq\Big\|\sum_{k=1}^m a_k
Tx_{n_k}\Big\|_1\geq C\Big(\sum_{k=1}^m
|a_k|^2\Big)^{\frac{1}{2}}.
\end{align*}

On the other hand, if $(Tx_{n_k})$ is equivalent to a disjoint sequence, using that $F$ satisfies a lower 2 estimate, then for scalars
$(a_k)_{k=1}^m$ we would have
$$
M\|T\| \Big(\sum_{k=1}^m|a_k|^2\big)^{\frac{1}{2}}\geq \|T\|\Big\|\sum_{k=1}^m a_k
x_{n_k}\Big\|_E \geq\Big\|\sum_{k=1}^ma_kT(x_{n_k})\Big\|_F\geq K\delta\Big(\sum_{k=1}^m|a_k|^2\Big)^{\frac{1}{2}}.
$$
Thus, in both cases this would imply that $T$ is not $\ell_2$-singular, and we reach a contradiction.
\end{proof}

In particular, this can be applied when $E$ (respectively, F) is an Orlicz space $L^\varphi[0,1]$ with indices $2<\alpha_\varphi^\infty\leq\beta_\varphi^\infty<\infty$ (respectively, $1<\alpha_\varphi^\infty\leq\beta_\varphi^\infty<2$, see for instance \cite{LT2} for the definition of the indices).

\begin{remark}
The hypothesis that $E$ has an unconditional basis in Theorem \ref{t:ell2compact} can be weakened to having an unconditional finite dimensional decomposition, or even to the unconditional subsequence property.
\end{remark}

In the remaining of the section we address the case of strictly singular operators from $L_\infty$.

\begin{proposition}\label{p:wksss}
Let $X$ be a Banach space and an operator $T:L_\infty\rightarrow X$. The following statements are equivalent:
\begin{enumerate}
\item $T$ is super strictly singular.
\item $T$ is strictly singular.
\item $T$ is $c_0$-singular.
\item $T$ is weakly compact.
\end{enumerate}

\end{proposition}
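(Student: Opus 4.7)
My plan is to establish the cycle $(1) \Rightarrow (2) \Rightarrow (3) \Rightarrow (4) \Rightarrow (1)$. The first two implications, $(1) \Rightarrow (2)$ and $(2) \Rightarrow (3)$, I would dispatch immediately from the definitions: super strict singularity is the Bernstein-number quantitative strengthening of strict singularity, and strict singularity precludes isomorphic copies on every infinite-dimensional subspace, in particular on $c_0$. For $(3) \Rightarrow (4)$, the plan is to invoke Pe\l czy\'nski's classical property $(V)$ theorem: since $L_\infty$ is isometric to a $C(K)$-space (for $K$ the hyperstonean spectrum of $L_\infty[0,1]$), every operator from it which does not fix a copy of $c_0$ is weakly compact.

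The substantial step is $(4) \Rightarrow (1)$. I would first establish the intermediate $(4) \Rightarrow (2)$: if $T:L_\infty \to X$ is weakly compact and were an isomorphism on some infinite-dimensional $Y \subset L_\infty$, then $T(B_Y) \sim B_Y$ would be relatively weakly compact, forcing $Y$ to be reflexive; but then by the Dunford--Pettis property of $L_\infty$, a normalized weakly null sequence in $Y$ would be sent by $T$ to a norm null sequence, contradicting the lower bound on $T|_Y$. For the full $(4) \Rightarrow (1)$, I plan to argue by contradiction using the Bernstein-number characterization: if $b_n(T) \geq \varepsilon$ for infinitely many $n$, choose $F_n \subset L_\infty$ of dimension $n$ on which $T$ is an $\varepsilon$-isomorphism. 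Passing to a free ultrafilter $\mathcal{U}$, the ultrapower $T_{\mathcal{U}}\colon (L_\infty)_{\mathcal{U}} \to X_{\mathcal{U}}$ is an isomorphism on an infinite-dimensional subspace. Since $(L_\infty)_{\mathcal{U}}$ is an $AM$-space with unit, hence isometric to some $C(K')$, I can apply the equivalences $(2)\Leftrightarrow(3)\Leftrightarrow(4)$ (already established for operators from any $C(K)$-space) to $T_{\mathcal{U}}$, concluding that $T_{\mathcal{U}}$ must fix a copy of $c_0$ inside $(L_\infty)_{\mathcal{U}}$.

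The main obstacle will be the final lifting step: extracting from the $c_0$-copy of $T_{\mathcal{U}}$ in $(L_\infty)_{\mathcal{U}}$ a genuine $c_0$-copy in $L_\infty$ on which $T$ is an isomorphism. Here I plan to exploit that $c_0$-copies in $(L_\infty)_{\mathcal{U}} \cong C(K')$ can be realized (via Pe\l czy\'nski--Rosenthal disjointification) by sequences of disjointly supported bounded functions, so that their representative sequences in $L_\infty$ are asymptotically disjointly supported and bounded. A standard diagonal-blocking extraction then produces a basic sequence in $L_\infty$ equivalent to the unit vector basis of $c_0$ on which $T$ is an isomorphism, contradicting the $c_0$-singularity of $T$ (which, by Pe\l czy\'nski's theorem applied to $T$ itself, follows from weak compactness) and thereby closing the cycle.
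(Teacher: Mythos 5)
Your implications $(1)\Rightarrow(2)\Rightarrow(3)$ and $(3)\Rightarrow(4)$ coincide with the paper's (the latter is exactly Pe\l czy\'nski's theorem, cited as \cite{Pel}), and your intermediate step $(4)\Rightarrow(2)$, via reflexivity of a fixed subspace plus the Dunford--Pettis property of $L_\infty$, is correct. The genuine gap is in the final passage $(4)\Rightarrow(1)$. Since weak compactness does not pass to ultrapowers, you rightly apply only the implication ``not strictly singular $\Rightarrow$ fixes a copy of $c_0$'' to $T_{\mathcal U}$ (valid, as $(L_\infty)_{\mathcal U}$ is again a $C(K')$ with the Dunford--Pettis property), obtaining a disjointly supported $c_0$-copy fixed by $T_{\mathcal U}$. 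But the lifting step then fails as described: if $\xi_k=[(x_k^i)_i]$ are the disjoint elements of $(L_\infty)_{\mathcal U}$, disjointness only says that $\lim_{\mathcal U}\||x_k^i|\wedge|x_j^i|\|_\infty=0$ for each fixed pair $k\neq j$, i.e.\ it constrains representatives \emph{at the same coordinate} $i$. A diagonal choice $y_m=x_m^{i_m}$ therefore carries no disjointness (nor any other) relation between $y_m$ and $y_{m'}$ for $m\neq m'$. What you can actually extract is, for each $n$, a family of $n$ pairwise almost disjoint elements of $B_{L_\infty}$ on which $\|T\cdot\|\geq\delta$, with supports that may vary arbitrarily with $n$; turning these finite, non-nested families into one infinite disjoint sequence (equivalently, into a contradiction with weak compactness of $T$) requires a uniformity that does not follow from weak compactness by soft arguments.

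That missing uniformity is precisely what the paper's proof imports: by \cite[Theorem 15.2]{DJT}, weak compactness of $T$ on $C(K)$ gives a probability measure $\mu$ and, for each $\varepsilon>0$, a constant $N(\varepsilon)$ with $\|Tf\|_X\leq N(\varepsilon)\|f\|_{L_p(\mu)}+\varepsilon\|f\|_\infty$, and combining this with the finite strict singularity of the inclusion $C(K)\hookrightarrow L_p(\mu)$ yields $(4)\Rightarrow(1)$ directly. (It also disposes of your finite families: $n$ disjointly supported elements of $B_{L_\infty}$ cannot all satisfy $\|f\|_{L_p(\mu)}\geq\eta$ once $n\eta^p>1$, while the estimate forces $\|f_k\|_{L_p(\mu)}\geq\delta/2N(\delta/2)$.) So your scheme can be completed, but only by adding this control-measure estimate, at which point the ultrapower detour becomes redundant; as it stands, the argument does not close.
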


\begin{proof}
The implications $(1)\Rightarrow (2)\Rightarrow(3)$ are trivial. Note that $L_\infty$ is isomorphic to a space $C(K)$ (take for instance $K$ to be $\beta\mathbb N$, the Stone-Cech compactification of $\mathbb N$). The implication $(3)\Rightarrow (4)$ was given in \cite{Pel}. The implication $(4)\Rightarrow (1)$ has been considered in a more general context in \cite{Le}, but we include a proof next.

If $T$ is weakly compact, by \cite[Theorem 15.2]{DJT} there is $p\in[1,\infty)$ and a probablity measure $\mu\in C(K)^*$, such that for every $\varepsilon>0$ there is $N(\varepsilon)>0$ such that
$$
\|Tf\|_X\leq N(\varepsilon)\|f\|_p+\varepsilon\|f\|_{\infty}.
$$
Since the inclusion $i:C(K)\hookrightarrow L_p(\mu)$ is strictly singular (cf. \cite[Theorem 5.2]{Ru}), letting $\varepsilon_1=\varepsilon/N(\varepsilon)$, there is some $n\in\mathbb N$ such that for every subspace $X\subseteq C(K)$ with $\mathrm{dim}(X)\geq n$, there is $x\in X$ such that $\|x\|_p\leq\varepsilon_1\|x\|_\infty$. Therefore, we have
$$
\|Tx\|_X\leq N(\varepsilon)\|x\|_p+\varepsilon\|x\|_{\infty}\leq 2\varepsilon\|x\|_\infty,
$$
which yields that $T:L_\infty\rightarrow X$ is super strictly singular.
\end{proof}

\begin{theorem}\label{t:sssLinf}
Let $X$ be a Banach space which is either separable or does not have any subspace isomorphic to $c_0$. Then every operator $T:L_\infty\rightarrow X$ is strictly singular.
\end{theorem}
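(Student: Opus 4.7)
The plan is to invoke Proposition~\ref{p:wksss}, which identifies strict singularity of an operator $T:L_\infty\to X$ with the (a priori stronger) properties of weak compactness and $c_0$-singularity. So in each of the two hypothesized cases on $X$, it suffices to verify one of these equivalent conditions.

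When $X$ contains no subspace isomorphic to $c_0$, I would establish $c_0$-singularity directly. Given any sequence of pairwise disjoint measurable subsets $(A_n)$ of positive measure in $[0,1]$, the closed linear span of $(\chi_{A_n})$ in $L_\infty$ is isometric to $c_0$, so $L_\infty$ carries abundant copies of $c_0$; if $T$ were an isomorphism on some subspace $Y\subset L_\infty$ isomorphic to $c_0$, then $T(Y)$ would embed $c_0$ into $X$, contradicting the hypothesis. Hence $T$ is $c_0$-singular, and Proposition~\ref{p:wksss} concludes.

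When $X$ is separable, the plan is to prove that $T$ is weakly compact by exploiting the classical fact that $L_\infty$ is a Grothendieck space, i.e., weak$^*$-convergent sequences in $L_\infty^*$ are weakly convergent. Separability of $X$ makes $B_{X^*}$ weak$^*$-metrizable and hence, by Alaoglu's theorem, weak$^*$-sequentially compact. Thus, given a bounded sequence $(x_n^*)\subset X^*$, I would extract a weak$^*$-convergent subsequence $(x_{n_k}^*)$; the weak$^*$-to-weak$^*$ continuity of $T^*$ then produces a weak$^*$-convergent image $(T^*x_{n_k}^*)$ in $L_\infty^*$, which by the Grothendieck property is actually weakly convergent. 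Eberlein--\v{S}mulian yields that $T^*(B_{X^*})$ is relatively weakly compact, and Gantmacher's theorem transfers this weak compactness back to $T$. Invoking Proposition~\ref{p:wksss} once more finishes the argument. The main subtlety is simply having the Grothendieck property of $L_\infty$ at hand; beyond that classical ingredient, the deduction is a routine assembly of Alaoglu, Eberlein--\v{S}mulian and Gantmacher.
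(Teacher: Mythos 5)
Your proof is correct, and the non-separable half (when $X$ contains no copy of $c_0$, so that $c_0$-singularity is automatic and Proposition~\ref{p:wksss} applies) is exactly what the paper does. For the separable case, however, you take a genuinely different route. The paper argues by contradiction: if $T$ is not strictly singular, Proposition~\ref{p:wksss} produces a subspace $E\subset L_\infty$ isomorphic to $c_0$ on which $T$ is an isomorphism; since $X$ is separable, Sobczyk's theorem gives a projection of $X$ onto $T(E)$, which pulls back to a projection of $L_\infty$ onto $E$, contradicting the fact that $c_0$ is not complemented in $L_\infty$. You instead verify condition (4) of Proposition~\ref{p:wksss} directly, by combining the weak$^*$-sequential compactness of $B_{X^*}$ (from separability of $X$), the weak$^*$-to-weak$^*$ continuity of $T^*$, the Grothendieck property of $L_\infty$, Eberlein--\v{S}mulian and Gantmacher to conclude that $T$ is weakly compact. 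Both arguments are sound. Yours has the advantage of proving the stronger conclusion of weak compactness (hence super strict singularity, via the proposition) by a soft functional-analytic assembly, and it generalizes verbatim to any Grothendieck domain space in place of $L_\infty$; its cost is that it imports the Grothendieck property of $L_\infty$ as a black box, which is itself usually established by arguments of the same depth as the Sobczyk--Phillips route the paper uses. The paper's argument is more elementary in its ingredients and more self-contained given the references already cited (\cite[Corollary 2.5.9, Theorem 2.5.5]{AK}). The observation in your write-up that spans of disjoint characteristic functions give isometric copies of $c_0$ in $L_\infty$ is true but not needed for the $c_0$-singularity step.
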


\begin{proof}
Let $X$ be separable. Suppose $T:L_\infty \rightarrow X$ is not strictly singular. By Proposition \ref{p:wksss}, $T$ is an isomorphism on a subspace $E\subset L_\infty$ isomorphic to $c_0$. By Sobczyk's theorem (cf. \cite[Corollary 2.5.9.]{AK}) there is a projection $P:X\rightarrow T(E)$. Now, $T^{-1}|_{T(E)}PT$ defines a projection on $L_\infty$ onto $E$, which is isomorphic to $c_0$. This is impossible (cf. \cite[Theorem 2.5.5.]{AK}).

On the other hand if $X$ does not have any subspace isomorphic to $c_0$, the result follows directly from  Proposition \ref{p:wksss}.
\end{proof}

\begin{corollary}\label{c:noc0}
If $X$ be a rearrangement invariant space, then  $X$ is separable if and only if every operator $T:L_\infty\rightarrow X$ is strictly singular.
\end{corollary}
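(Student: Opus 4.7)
The plan is to prove both implications of the biconditional. The direction ``$X$ separable $\Rightarrow$ every operator $T:L_\infty\to X$ is strictly singular'' follows immediately from Theorem \ref{t:sssLinf}, since a separable Banach space satisfies its hypothesis. The harder converse will be proved by contrapositive: assuming $X$ non-separable, I exhibit a non-strictly-singular operator $T : L_\infty \to X$.

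The key input is the classical equivalence, for a rearrangement invariant space on a finite measure space, between separability and order continuity of the norm (cf.~\cite[Chapter II]{BS}, \cite[Chapter 2]{LT2}). So $X$ fails to have order continuous norm, and by standard Banach lattice theory \cite{MN} there exist $u \in X_+$ and a pairwise disjoint sequence $(y_n) \subset X_+$ with $y_n \leq u$ and $\|y_n\|_X \geq \delta > 0$ for every $n$. The disjointness together with the order bound give the two-sided estimate
\[
c_1 \sup_n |a_n| \;\leq\; \Big\|\sum_n a_n y_n\Big\|_X \;\leq\; c_2 \sup_n |a_n|
\]
for all finitely supported scalar sequences, so the closed linear span $[y_n]$ is isomorphic to $c_0$ in $X$.

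Now pick pairwise disjoint measurable sets $A_n \subset [0,1]$ with $\mu(A_n) > 0$ and define
\[
T f \;=\; \sum_{n \geq 1} \Big(\tfrac{1}{\mu(A_n)} \int_{A_n} f \, d\mu\Big)\, y_n, \qquad f \in L_\infty,
\]
interpreting the sum pointwise, which is meaningful because the $y_n$ have pairwise disjoint supports. Since each coefficient has modulus at most $\|f\|_\infty$ and $\sum_n |y_n| \leq u$ pointwise, $|Tf| \leq \|f\|_\infty\, u$, so $T$ is a bounded linear operator from $L_\infty$ into $X$ with $\|T\| \leq \|u\|_X$. By construction $T(\chi_{A_n}) = y_n$ for every $n$, and the closed linear span $[\chi_{A_n}] \subset L_\infty$ is isometric to $c_0$ (normalized disjointly supported characteristic functions in $L_\infty$); hence $T$ restricts to an isomorphism from this $c_0$-subspace of $L_\infty$ onto $[y_n] \cong c_0$ in $X$, so $T$ is not strictly singular. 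The only delicate step is the standard extraction of a disjoint, order-bounded, seminormalized sequence from failure of order continuity, which is classical; the rest of the argument is elementary.
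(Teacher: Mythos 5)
Your proof is correct, and for the converse it takes a genuinely different route from the paper. The forward direction is identical (both invoke Theorem \ref{t:sssLinf}). For the converse, the paper's proof is a one-liner resting on a heavier structural fact: a non-separable rearrangement invariant space contains a subspace isomorphic to $L_\infty$ (equivalently, failure of order continuity in a $\sigma$-complete Banach lattice yields a lattice copy of $\ell_\infty\cong L_\infty$), and the corresponding embedding is itself the desired non-strictly-singular operator $L_\infty\to X$. You instead use only the elementary consequence of non-order-continuity --- a disjoint, order-bounded, seminormalized sequence spanning a copy of $c_0$ --- and then build an explicit averaging-type operator $Tf=\sum_n\bigl(\mu(A_n)^{-1}\int_{A_n}f\,d\mu\bigr)y_n$ that carries the isometric $c_0$-copy $[\chi_{A_n}]\subset L_\infty$ isomorphically onto $[y_n]$. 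Your well-definedness argument (the pointwise sum is dominated by $\|f\|_\infty u$ and $X$ is an ideal) and the two-sided $c_0$-estimates are sound. What each approach buys: yours is more self-contained and avoids quoting the $\ell_\infty$-embedding theorem, at the cost of an explicit construction; the paper's is shorter and produces an operator invertible on a copy of all of $L_\infty$ rather than merely on a $c_0$-subspace, but it leans on a nontrivial citation. Both are acceptable proofs of the corollary.
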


\begin{proof}
If $X$ is separable, the statement follows from Theorem \ref{t:sssLinf}. On the other hand, if $X$ is non-separable, then it contains a subspace isomorphic to $L_\infty$ and the corresponding embedding provides a non-strictly singular operator into $X$.
\end{proof}

The following is a quantified version of Theorem \ref{t:sssLinf} by means of estimating the corresponding Bernstein numbers.

\begin{theorem}\label{t:cotype}
Suppose $X$ is a Banach space with cotype $q\in[2,\infty)$, then there is a constant $K>0$ such that every operator $T\in L(L_\infty,X)$ is super strictly singular and for every $n\in\mathbb N$
$$
b_n(T)\leq K\|T\|n^{-1/q},
$$
\end{theorem}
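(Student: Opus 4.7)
The plan is to establish the quantitative Bernstein decay $b_n(T)\leq K\|T\|\,n^{-1/q}$ directly; super strict singularity of $T$ then follows automatically, since this class coincides with $\{T:\,b_n(T)\to 0\}$. The argument combines a cotype-$q$ factorization of $T$ through an $L_q$-space with a volumetric estimate of the Bernstein numbers of the associated formal inclusion.

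The factorization step is the theorem of Maurey (for $q=2$) and Pisier (for $2<q<\infty$): the cotype-$q$ hypothesis on $X$ guarantees that every $T:L_\infty\to X$ factors as $T=S\circ j_\mu$, where $\mu$ is a probability measure on the underlying measure space, $j_\mu:L_\infty(\mu)\to L_q(\mu)$ is the natural inclusion, and $\|S\|\leq K(q,C_q(X))\,\|T\|$. By the ideal property $b_n(AB)\leq\|A\|\,b_n(B)$, the task reduces to proving $b_n(j_\mu)\leq n^{-1/q}$ for an arbitrary probability measure $\mu$.

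To estimate $b_n(j_\mu)$, given any $n$-dimensional subspace $F\subset L_\infty(\mu)$, I would pick an $L_2(\mu)$-orthonormal basis $(f_i)_{i=1}^n$ of $F$ (identifying $L_\infty(\mu)$ with a space $C(K)$ if pointwise values need to be handled rigorously) and set $\Phi=\sum_{i=1}^n f_i^2$. Since $\mu$ is a probability measure, $\int\Phi\,d\mu=n$ forces $\|\Phi\|_\infty\geq n$, so for every $\varepsilon>0$ one may choose a point $x_0$ with $\Phi(x_0)\geq n-\varepsilon$. The vector $g=\sum_i f_i(x_0)\,f_i\in F$ then satisfies $\|g\|_{L_2(\mu)}^2=\Phi(x_0)$ by orthonormality, while $g(x_0)=\Phi(x_0)$ forces $\|g\|_\infty\geq\Phi(x_0)$, giving
$$
\frac{\|g\|_{L_2(\mu)}}{\|g\|_\infty}\leq\frac{1}{\sqrt{\Phi(x_0)}}\leq\frac{1}{\sqrt{n-\varepsilon}}.
$$
For $q\geq 2$, H\"older's inequality $\|g\|_q^q\leq\|g\|_\infty^{q-2}\|g\|_{L_2(\mu)}^2$ then yields
$$
\frac{\|g\|_q}{\|g\|_\infty}\leq\Big(\frac{\|g\|_{L_2(\mu)}}{\|g\|_\infty}\Big)^{2/q}\leq(n-\varepsilon)^{-1/q},
$$
and letting $\varepsilon\downarrow 0$ completes the bound $b_n(j_\mu)\leq n^{-1/q}$.

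The main obstacle I anticipate is the precise form of the factorization when $q>2$: Maurey's cotype theorem only yields that $T$ is $(q,1)$-summing, and Pietsch's domination then factors $T$ through the Lorentz space $L_{q,1}(\mu)$ rather than $L_q(\mu)$. The argument above nevertheless adapts, since a standard Lorentz-norm estimate $\|g\|_{L_{q,1}(\mu)}\leq C_q\|g\|_{L_2(\mu)}^{2/q}\|g\|_\infty^{(q-2)/q}$ (obtained by splitting the defining integral of the Lorentz norm at an optimal level and applying Chebyshev's inequality via $\|g\|_{L_2(\mu)}$) gives $b_n(L_\infty\to L_{q,1}(\mu))\leq C_q\,n^{-1/q}$, so the overall decay rate $n^{-1/q}$ is preserved.
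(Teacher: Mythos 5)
Your argument is correct, but it takes a genuinely different route from the paper's. Both proofs ultimately rest on Maurey's theorem that an operator from a $C(K)$-space into a cotype-$q$ space is $(q,2)$-summing with $\pi_{q,2}(T)\leq C\|T\|$ (\cite[Theorem 11.14]{DJT}). The paper applies this summing inequality \emph{directly}: inside any $2n$-dimensional subspace of $L_\infty$ it extracts, via \cite[Lemma 1.3]{DJT}, vectors $x_1,\dots,x_n$ with $1/2\leq\|x_k\|\leq1$ and weak $\ell_2$-norm at most $1$, so that $\bigl(\sum_{k}\|Tx_k\|^q\bigr)^{1/q}\leq C\|T\|$ and hence some $x_{i_0}$ satisfies $\|Tx_{i_0}\|\leq 2C\|T\|n^{-1/q}\|x_{i_0}\|$; no factorization is needed. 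You instead convert the summing property into a factorization of $T$ ($T=S\circ j_\mu$ through $L_2(\mu)$ for $q=2$ via Pietsch, and through $L_{q,1}(\mu)$ for $q>2$ via Pisier's factorization of $(q,1)$-summing operators on $C(K)$-spaces) and then prove the clean bound $b_n(j_\mu)\leq n^{-1/q}$ by the orthonormal-basis/point-evaluation argument; this is all correct, including the Lorentz-norm inequality $\|g\|_{L_{q,1}(\mu)}\leq C_q\|g\|_{L_2(\mu)}^{2/q}\|g\|_\infty^{1-2/q}$ (and in the degenerate case where the $L_2(\mu)$-form vanishes on some nonzero element of $F$ the infimum is $0$, so there is nothing to prove). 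What your route buys is an isolated, reusable estimate for the Bernstein numbers of the canonical inclusion of $C(K)$ into $L_q(\mu)$ or $L_{q,1}(\mu)$; what it costs is the reliance, for $q>2$, on Pisier's factorization theorem, which is considerably deeper than anything the paper invokes. Note also that Maurey's theorem gives $(q,2)$-summing, not merely $(q,1)$-summing as you state; exploiting the full $(q,2)$ strength on a well-chosen finite system, as the paper does, is exactly what makes the factorization step avoidable.
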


\begin{proof}
Let $E$ be a $2n$-dimensional subspace of $L_\infty$. By  \cite[Lemma 1.3]{DJT}, there exist $x_1,\ldots, x_n\in E$ such that $1/2\leq\|x_k\|\leq1$ and
\begin{equation}\label{weak2sum}
\|\sum_{k=1}^n a_kx_k\|\leq\Big(\sum_{k=1}^na_k^2\Big)^{1/2}.
\end{equation}
This implies that
$$
\sup\Big\{\Big(\sum_{k=1}^n |x^*(x_k)|^2\Big)^{\frac12}:x^*\in L_\infty^*,\,\|x^*\|=1\Big\}\leq 1.
$$
Since $X$ has cotype $q\in[2,\infty)$, using \cite[Theorem 11.14]{DJT}, there is a constant $C>0$ such that
\begin{equation}
\Big(\sum_{k=1}^n \|Tx_k\|^q\Big)^{\frac1q}\leq C\|T\|\sup\Big\{\Big(\sum_{k=1}^n |x^*(x_k)|^2\Big)^{\frac12}:x^*\in L_\infty^*,\,\|x^*\|=1\Big\}\leq C\|T\|
\end{equation}
Therefore, there must be some $i_0\in\{1,\ldots,n\}$ such that $\|Tx_{i_0}\|\leq C\|T\|n^{-1/q}$, as claimed.
\end{proof}

\begin{corollary}
Let $1\leq q<\infty$. Every operator $T:L_\infty\rightarrow L_q$ is super strictly singular, and
$$
b_n(T)\leq K\|T\|n^{-\frac1{\max\{q,2\}}}.
$$
\end{corollary}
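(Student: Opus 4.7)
The plan is to derive this as an immediate application of Theorem \ref{t:cotype} to the target space $X=L_q$. The only external input required is the classical fact that $L_q[0,1]$ has cotype $\max\{q,2\}$: for $q\ge 2$ this is the standard consequence of Khintchine's inequality (giving cotype $q$), while for $1\le q\le 2$ the space has cotype $2$ (as a direct consequence of having type and cotype exchanged under duality, or more concretely because any subspace of $L_q$ with $q\le 2$ embeds into a Hilbertian-like structure via Kahane's inequality). I would cite \cite[Chapter 11]{DJT} or \cite[Section 1.f]{LT2} for this fact.

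Once this is in place, Theorem \ref{t:cotype} applied with cotype constant $q_0=\max\{q,2\}$ yields directly that every $T\in L(L_\infty,L_q)$ is super strictly singular and that
$$
b_n(T)\le K\|T\|\,n^{-1/\max\{q,2\}}
$$
for some constant $K$ depending only on the cotype constant of $L_q$. There is nothing further to verify: the exponent $1/\max\{q,2\}$ in the statement is exactly the exponent $1/q_0$ produced by Theorem \ref{t:cotype} for our choice of $q_0$.

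There is no real obstacle in this argument; the work was already done in Theorem \ref{t:cotype}. The only minor point worth noting in the writeup is that the case $q=\infty$ is excluded (as it should be, since the identity $L_\infty\to L_\infty$ is obviously not strictly singular), and that the bound is sharp in its dependence on $n$ in certain cases --- for instance the formal inclusion $L_\infty\hookrightarrow L_q$ is known to achieve the rate $n^{-1/\max\{q,2\}}$ up to constants, which one may add as a brief remark after the proof.
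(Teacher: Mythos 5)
Your proposal is correct and is exactly the intended argument: the paper states this corollary without proof precisely because it follows immediately from Theorem \ref{t:cotype} once one invokes the classical fact that $L_q[0,1]$ has cotype $\max\{q,2\}$, which you correctly cite to \cite[Chapter 11]{DJT}. The only blemish is your parenthetical justification of cotype $2$ for $1\leq q\leq 2$ (type--cotype duality does not yield this directly, and $L_1$ is not even reflexive), but since you rely on the standard reference rather than on that sketch, the proof stands as written.
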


\end{document}